\DeclareSymbolFont{AMSb}{U}{msb}{m}{n}
\DeclareSymbolFontAlphabet{\mathbb}{AMSb}
\newcommand{\E}{\mathscr{E}}
\newcommand\dist{\,{\rm dist}\,}
\newcommand\supp{\mathop{\rm supp}}
\newcommand\p{\partial}
\newcommand{\at}[1]{\vert\sb{\sb{#1}}}
\def\Re{{\rm Re\, }}
\def\Im{{\rm Im\,}}
\providecommand\C{{\mathbb C}}
\renewcommand\C{{\mathbb C}}
\newcommand{\R}{{\mathbb R}}
\newcommand{\N}{{\mathbb N}}
\newcommand{\Abs}[1]{\left\vert#1\right\vert}
\newcommand{\abs}[1]{\vert #1 \vert}
\newcommand{\Norm}[1]{\left\Vert #1 \right\Vert}
\newcommand{\norm}[1]{\Vert #1 \Vert}
\newcommand{\const}{{\rm const}}
\newcommand\cnst{\mathop{C}}
\newcommand\sothat{{\rm :}\ }
\providecommand{\ltor}[1]{
\ifnum #1=1{\it i}\else\ifnum #1=2{\it ii}\else\ifnum #1=3{\it iii}
\else\ifnum #1=4 {\it iv}\fi\fi\fi\fi
}
\DeclareMathSymbol{\varOmega}{\mathord}{letters}{"0A}
\theoremstyle{plain}
\theoremstyle{definition}
\theoremstyle{remark}
\numberwithin{equation}{section}
\begin{document}

\allowdisplaybreaks

\renewcommand{\PaperNumber}{010}

\FirstPageHeading

\renewcommand{\thefootnote}{$\star$}

\ShortArticleName{Global Attraction to Solitary Waves}

\ArticleName{Global Attraction to Solitary Waves\\ in Models Based on the Klein--Gordon Equation\footnote{This
paper is a contribution to the Proceedings of the Seventh
International Conference ``Symmetry in Nonlinear Mathematical
Physics'' (June 24--30, 2007, Kyiv, Ukraine). The full collection
is available at
\href{http://www.emis.de/journals/SIGMA/symmetry2007.html}{http://www.emis.de/journals/SIGMA/symmetry2007.html}}}

\Author{Alexander I. KOMECH~$^{\dag\S}$ and Andrew A. KOMECH~$^{\ddag\S}$}

\AuthorNameForHeading{A.I.~Komech and A.A.~Komech}
\Address{$^\dag$~Faculty of Mathematics, University of Vienna, Wien A-1090, Austria}
\EmailD{\href{mailto:alexander.komech@univie.ac.at}{alexander.komech@univie.ac.at}}
\URLaddressD{\url{http://www.mat.univie.ac.at/~komech/}}

\Address{$^\ddag$~Mathematics Department, Texas A\&M University, College Station, TX 77843, USA}
\EmailD{\href{mailto:comech@math.tamu.edu}{comech@math.tamu.edu}}
\URLaddressD{\url{http://www.math.tamu.edu/~comech/}}

\Address{$^\S$~Institute for Information Transmission Problems,
B. Karetny 19, Moscow 101447, Russia}

\ArticleDates{Received November 01, 2007, in f\/inal form January
22, 2008; Published online January 31, 2008}

\Abstract{We review recent results
on global attractors of $\mathbf{U}(1)$-invariant
dispersive Hamiltonian systems.
We study several models
based on the Klein--Gordon equation
and sketch the proof that in these models,
under certain generic assumptions,
the weak global attractor is represented
by the set of all solitary waves.
In general,
the attractors may also contain multifrequency solitary waves;
we give examples of systems
which contain such solutions.}

\Keywords{global attractors; solitary waves;
solitary asymptotics;
nonlinear Klein--Gordon equation; dispersive Hamiltonian systems;
unitary invariance}

\Classification{35B41; 37K40; 37L30; 37N20; 81Q05}

\section{Introduction}
\label{sect-introduction}

The long time asymptotics
for nonlinear wave equations
have been the subject of intensive research,
starting with the pioneering papers by
Segal
\cite{MR0153967,MR0152908},
Strauss \cite{MR0233062},
and Morawetz and Strauss \cite{MR0303097},
where
the nonlinear scattering and local attraction to zero were considered.
Global attraction (for large initial data) to zero
may not hold if there are
\emph{quasistationary solitary wave solutions}
of the form
\begin{gather}\label{sw00}
\psi(x,t)=\phi(x)e^{-i\omega t},
\qquad
{with}
\quad
\omega\in\R,
\quad \lim\sb{\abs{x}\to\infty}\phi(x)=0.
\end{gather}
We will call such solutions \emph{solitary waves}.
Other appropriate names are \emph{nonlinear eigenfunctions}
and \emph{quantum stationary states}
(the solution (\ref{sw00}) is not exactly stationary,
but certain observable quantities, such as the charge
and current densities, are time-independent indeed).

Existence of such solitary waves
was addressed by Strauss in \cite{MR0454365},
and then the orbital stability of solitary waves
has been studied
in \cite{MR901236,MR723756,MR792821,MR804458}.
%\cite{MR723756,MR804458,MR792821,MR901236}.
The asymptotic stability of solitary waves
has been studied by
Sof\/fer and Weinstein \cite{MR1071238,MR1170476},
Buslaev and Perelman \cite{MR1199635e,MR1334139},
and then by others.

The existing results suggest that the set of
orbitally stable solitary waves
typically forms a~\emph{local attractor},
that is to say, attracts any
f\/inite energy solutions
that were initially close to it.
Moreover, a natural hypothesis is that
the set for all solitary waves forms a
\emph{global attractor}
of all f\/inite energy solutions.
%% This question is addressed in this paper.
%%AC changed:
We address this question in the present paper,
reviewing the results on
the global attraction in several models
based on the Klein--Gordon equation,
and describing the developed techniques.
%More precisely,
%for several $\mathbf{U}(1)$-invariant Hamiltonian systems
%based on the Klein--Gordon equation,
%we prove that
%under certain generic assumptions
%the weak global attractor
%of all f\/inite energy solutions
%is f\/inite-dimensional and coincides with the
%set of all solitary waves.

%Our investigation of properties of global attractors
%of dispersive Hamiltonian systems
%is inspired by Bohr's transitions
%(``quantum jumps")
%from one stationary orbit to another.
%Our hope is to clarify the special role of
%solutions of the form (\ref{sw00}).

%%AC changed:
We brief\/ly discuss
%sketch the development of the subject of
the long-time solitary wave asymptotics
for $\mathbf{U}(1)$-invariant Hamiltonian systems
in Section~\ref{sect-history}.
The def\/initions and results
on global attraction to solitary waves
from the recent papers \cite{ubk-arma,ukk-mpi,ukr-mpi}
are presented
in Section~\ref{sect-results}.
We also give there a very brief sketch of the proof.
In Section~\ref{sect-proof},
we give a description of all the steps
%%AC changed:
(omitting excessive technical points)
of the argument for the simplest model:
Klein--Gordon equation interacting with a nonlinear oscillator.
The key parts of the proof are presented in full detail.
The examples of (untypical) multifrequency solitary waves
are given in Section~\ref{sect-examples}.

\section[History of solitary asymptotics
for $\mathbf{U}(1)$-invariant Hamiltonian systems]{History of solitary asymptotics\\
for $\boldsymbol{\mathbf{U}(1)}$-invariant Hamiltonian systems}
\label{sect-history}

\subsection{Quantum theory}

\subsubsection*{Bohr's stationary orbits
as solitary waves}

Let us focus on the behavior
of the electron in the Hydrogen atom.
According to Bohr's postulates \cite{bohr1913},
an unperturbed electron runs forever
along
certain \emph{stationary orbit},
which we denote~$\vert E\rangle$
and call \emph{quantum stationary state}.
Once in such a state,
the electron has a f\/ixed value of ener\-gy~$E$,
not losing the energy via emitting radiation.
% $\vert E\rangle$
%that has a def\/inite value $E$ of the energy.
The electron can jump from one
quantum stationary state to another,
\begin{gather}\label{transitions}
\vert E\sb{-}\rangle
\longmapsto
\vert E\sb{+}\rangle,
\end{gather}
emitting or absorbing
a quantum of light with the energy
equal to the dif\/ference of
the energies~$E\sb{+}$ and~$E\sb{-}$.
The old quantum theory
was based on the quantization condition
\begin{gather}\label{oqt-qc}
\oint\mathbf{p}\cdot d\mathbf{q}=2\pi\hbar n,
\qquad n\in\N.
\end{gather}
This condition leads to the values
\begin{gather*}%\label{en-is}
E\sb n=-\frac{m{\rm e}^4}{2\hbar^2 n^2},
\qquad
n\in\N,
\end{gather*}
for the energy levels in Hydrogen,
in a good agreement with the experiment.
Apparently, the condition (\ref{oqt-qc})
did not explain the perpetual
circular motion of the electron.
According to the classical Electrodynamics,
such a motion would be accompanied
by the loss of energy via radiation.

In terms of the wavelength
$\lambda=\frac{2\pi\hbar}{\abs{\mathbf{p}}}$
of de Broglie's \emph{phase waves}
\cite{deBroglie1924},
the condition (\ref{oqt-qc})
states that
the length of the classical orbit of the electron
is the integer multiple of $\lambda$.
Following de Broglie's ideas,
Schr\"odinger identif\/ied
Bohr's \emph{stationary orbits},
or quantum stationary states~$\vert E\rangle$,
with the wave functions
that have the form
\begin{gather*}%\label{solitary-waves-0}
\psi(\mathbf{x},t)
=\phi\sb\omega(\mathbf{x})e\sp{-i\omega t},
\qquad
\omega=E/\hbar,
\end{gather*}
where $\hbar$ is Planck's constant.
Physically,
the charge and current densities
\begin{gather*}
\rho(\mathbf{x},t)={\rm e}\bar\psi\psi,
\qquad
\mathbf{j}(\mathbf{x},t)=\frac{{\rm e}}{2i}
(\bar\psi\cdot\nabla\psi-\nabla\bar\psi\cdot\psi),
\end{gather*}
with ${\rm e}<0$ being the charge of the electron,
which correspond to
the (quasi)stationary states of the form
$\psi(\mathbf{x},t)
=\phi\sb\omega(\mathbf{x})e^{-i\omega t}$
do not depend on time,
and therefore the generated electromagnetic f\/ield
is also stationary
and does not carry the energy away from the system,
allowing the electron cloud to f\/low forever
around the nucleus.

\subsubsection*{Bohr's transitions as global attraction to solitary waves}

Bohr's second postulate
states that the electrons can jump from one
quantum stationary state
(Bohr's \emph{stationary orbit}) to another.
This postulate
suggests the dynamical interpretation
of Bohr's transitions
as long-time attraction
\begin{gather}\label{ga}
\Psi(t)\longrightarrow\vert E\sb\pm\rangle,
\qquad
t\to\pm\infty
\end{gather}
for any trajectory $\Psi(t)$ of the corresponding dynamical system,
where the limiting states $\vert E\sb\pm\rangle$ generally
depend on the trajectory.
Then the quantum stationary states
$\mathcal{S}\sb{0}$
should be viewed as the points of the \emph{global attractor}
$\mathscr{A}$.

%(see Figure~\ref{fig-gaa}).
%\begin{figure}%[htbp]
%{
%\input f2.pstex_t
%}
%\vskip 0.5cm
%\caption{
%Attraction of any trajectory $\Psi(t)$ to the set
%of solitary waves
%as $t\to\pm\infty$.}
%\label{fig-gaa}
%\end{figure}

%\begin{figure}[htbp]
%\input komech-fig-attractor-h.tex
%\caption{
%$\mathcal{S}\sb 0$ is the set of
%quantum stationary states
%$\vert E\sb n\rangle=\phi\sb n(x)e^{-i\frac{E\sb n}{\hbar}t}$,
%represented by dashed circles.
%Under a perturbation,
%the electron wave function $\Psi(t)$
%leaves the initial state $\vert E\sb{-}\rangle$
%and approaches
%some final state $\vert E\sb{+}\rangle$
%as $t\to+\infty$.
%The outgoing photon of the energy $h\nu=E\sb{-}-E\sb{+}$
%is not pictured.
%}
%\label{fig-attractor-h}
%\end{figure}

The attraction (\ref{ga})
takes the form of the long-time asymptotics
\begin{gather}\label{asymptotics}
\psi(x,t)
\sim
\phi\sb{\omega\sb\pm}(x)e\sp{-i\omega\sb{\pm}t},
\qquad
t\to\pm\infty,
\end{gather}
that hold for each f\/inite energy solution.
%See Figure~\ref{fig-attractor-h}.
However, because of the superposition
principle,
the asymptotics of type (\ref{asymptotics})
are generally impossible for
the linear autonomous
equation,
be it the
Schr\"o\-din\-ger equation
\begin{gather*}%\label{true-sch-h}
i\hbar\p\sb t
\psi
=-\frac{\hbar^2}{2m}\Delta\psi
-\frac{{\rm e}^2}{\abs{\mathbf{x}}}\psi,
\end{gather*}
or relativistic Schr\"odinger
or Dirac equation in the Coulomb f\/ield.
An adequate description of this process
requires to consider the equation
for the electron wave function
(Schr\"o\-din\-ger or Dirac equation)
coupled to the Maxwell system
which governs the time evolution of the four-potential
$A(x,t)=(\varphi(x,t),\mathbf{A}(x,t))$:
\begin{gather*}
(i\hbar\p\sb t-{\rm e}\varphi)^2\psi
=\left(c\frac{\hbar}{i}\nabla-{\rm e}\mathbf{A}\right)^2\psi+m^2 c^4\psi,\nonumber
\\
\square\varphi
=4\pi{\rm e}
(\bar\psi\psi-\delta(\mathbf{x})),
\qquad
\square\mathbf{A}
=4\pi{\rm e}
\frac
{\bar\psi\cdot\nabla\psi
-\nabla\bar\psi\cdot\psi}{2i}.%\label{true-KG-a}
\end{gather*}
%Above, ${\rm e}<0$ is the charge of the electron.
Consideration of such a system
seems inevitable,
because, again by Bohr's postulates,
the transitions (\ref{transitions})
are followed by electromagnetic radiation
responsible for the atomic spectra.
Moreover,
the Lamb shift
(the energy of $2S\sb{1/2}$ state being slightly higher
than the energy of $2P\sb{1/2}$ state)
can not be explained in terms
of the linear Dirac equation in the external Coulomb f\/ield.
Its theoretical explanation
within the Quantum Electrodynamics
takes into account the higher order
interactions of
the electron wave function
with the electromagnetic f\/ield,
referred to as the vacuum polarization and the electron self-energy correction.

The coupled Maxwell--Schr\"o\-din\-ger system
was initially introduced in \cite{Sch81109}.
It is a $\mathbf{U}(1)$-invariant
nonlinear Hamiltonian system.
Its global well-posedness was considered in \cite{MR1331696}.
One might expect the following
generalization of asymptotics (\ref{asymptotics})
for solutions to the coupled Maxwell--Schr\"o\-din\-ger
(or Maxwell--Dirac)
equations:
\begin{gather}\label{asymptotics-a}
(\psi(x,t), A(x,t))
\sim
\left(
\phi\sb{\omega\sb\pm}(x)e\sp{-i\omega\sb\pm t},
A\sb{\omega\sb\pm}(x)
\right),
\qquad
t\to\pm\infty.
\end{gather}
The asymptotics (\ref{asymptotics-a}) would mean that
the set of all solitary waves
\[
\big\{\left(\phi\sb\omega e^{-i\omega t},
A\sb\omega\right):\omega\in\R\big\}
\]
forms a global attractor for the coupled system.
The asymptotics of this form are not available yet
in the context of coupled systems.
Let us mention that
the existence of the solitary  waves for
the coupled Maxwell--Dirac equations
was established in \cite{MR1386737}.

\subsection{Solitary waves as global attractors for dispersive systems}

Convergence to a global attractor
is well known for dissipative systems, like Navier--Stokes equations
(see \cite{MR1156492,He81,MR1441312}).
For such systems,
the global attractor is formed by the \emph{static stationary states},
and the corresponding asymptotics (\ref{asymptotics})
only hold for $t\to+\infty$.

We would like to know whether dispersive Hamiltonian systems
could, in the same spirit,
possess f\/inite dimensional global attractors,
and whether such attractors
are formed by the solitary waves.
%%AC changed:
Although there is no dissipation per se,
we expect that
the attraction is caused by
certain friction mechanism
via the dispersion (local energy decay).
Because of the dif\/f\/iculties posed by the system of interacting
Maxwell and Dirac (or Schr\"odinger) f\/ields
(and, in particular, absence of the a priori
estimates for such systems),
we will work with simpler models that share
certain key properties of the coupled Maxwell--Dirac
or Maxwell--Schr\"odinger systems.
Let us try to single out these key features:
\begin{enumerate}\itemsep=0pt
\item
\emph{The system is $\mathbf{U}(1)$-invariant.}\\
This invariance leads
to the existence of solitary wave solutions
$\phi\sb\omega(x)e^{-i\omega t}$.

\item
\emph{The linear part of the system has a dispersive character.}\\
This property provides certain dissipative features
in a Hamiltonian system,
due to local energy decay via the dispersion mechanism.
\item
\emph{The system is nonlinear.}\\
The nonlinearity is needed for the convergence to a single state
of the form $\phi\sb\omega(x)e^{-i\omega t}$.
Bohr type transitions
to pure eigenstates of the energy operator
are impossible in a linear system
because of the superposition principle.
\end{enumerate}
We suggest that these are the very features
are responsible for the global attraction,
such as~(\ref{asymptotics}), (\ref{asymptotics-a}),
to ``quantum stationary states''.

\begin{remark}
The global attraction
(\ref{asymptotics}), (\ref{asymptotics-a})
for $\mathbf{U}(1)$-invariant equations
suggests the corresponding extension
to general $\mathbf{G}$-invariant equations
($\mathbf{G}$ being the Lie group):
\begin{gather}\label{asymptotics-g}
\psi(x,t)
\sim
\psi\sb\pm(x,t)=e\sp{\bm\Omega\sb{\pm}t}\phi\sb\pm(x),
\qquad
t\to\pm\infty,
\end{gather}
where $\bm\Omega\sb{\pm}$ belong to the corresponding Lie algebra
and $e\sp{\bm\Omega\sb{\pm}t}$
are the one-parameter subgroups.
Respectively,
the global attractor would consist of the solitary waves
(\ref{asymptotics-g}).
%%AC removed:
%In particular, for the unitary group $\mathbf{G}=\mathbf{SU}(3)$,
%the asymptotics (\ref{asymptotics-g})
%relate the ``quantum stationary states'' to the structure of the
%corresponding Lie algebra $\mathbf{su}(3)$.
On a seemingly related note,
let us mention that according to Gell-Mann--Ne'eman theory \cite{GN64}
there is a cor\-respondence
between the Lie algebras
and the classif\/ication of the elementary particles
which are the ``quantum stationary states''.
The correspondence
has been conf\/irmed
experimentally by the discovery of the omega-minus Hyperon.
\end{remark}

Besides Maxwell--Dirac system,
naturally, there are various nonlinear systems under
consideration in the Quantum Physics.
One of the simpler nonlinear models is
the nonlinear Klein--Gordon equation
which takes its origin from the articles by Schif\/f
\cite{PhysRev.84.1,PhysRev.84.10},
in his research
on the classical nonlinear meson theory of nuclear forces.
The mathematical analysis of this equation
is started by J\"orgens \cite{MR0130462}
and Segal \cite{MR0153967},
who studied its global well-posedness in the energy space.
%Today's most complete results
%on the global well-posedness are in \cite{MR786279}.
Since then, this equation
(alongside with the
nonlinear Schr\"odinger equation)
has been the main playground
for developing tools to handle
more general nonlinear Hamiltonian systems.
The nonlinear Klein--Gordon equation
is a natural candidate
for exhibiting solitary asymptotics~(\ref{asymptotics}).

Now let us describe the existing results on attractors
in the context of dispersive Hamiltonian systems.

\subsubsection*{Local and global attraction to zero}

The asymptotics of type (\ref{asymptotics})
were discovered f\/irst with $\psi\sb\pm=0$
in the scattering theory.
Namely, Segal, Morawetz, and Strauss
studied the (nonlinear) scattering
for solutions of nonlinear Klein--Gordon equation in $\R^3$
\cite{MR0303097,MR0217453,MR0233062}.
%who analyzed when the small initial data
%completely disperse.
We may interpret these results
as \emph{local}
(referring to small initial data)
attraction to zero:
\begin{gather}\label{attraction-0}
\psi(x,t)\sim\psi\sb\pm=0,\qquad
t\to\pm\infty.
\end{gather}
The asymptotics (\ref{attraction-0})
hold on an arbitrary compact set
and represent the well-known local energy decay.
These results were further extended in
\cite{MR824083,MR535231,MR1120284,MR654553}.
Apparently, there could be no \emph{global}
attraction to zero
(\emph{global} referring to arbitrary initial data)
if there are solitary wave solutions
$\phi\sb\omega(x)e\sp{-i\omega t}$.

\subsubsection*{Solitary waves}

The existence of solitary wave solutions
of the form
\begin{gather*}
\psi\sb\omega(x,t)=\phi\sb\omega(x)e\sp{-i\omega t},
\qquad
\omega\in\R,
\quad\phi\sb\omega\in H\sp 1(\R^n),
\end{gather*}
with $H\sp{1}(\R^n)$ being the Sobolev space,
to the nonlinear Klein--Gordon equation (and nonlinear
Schr\"odinger equation)
in $\R^n$,
in a rather generic situation,
was established in \cite{MR0454365}
(a more general result
was obtained in \cite{MR695535,MR695536}).
Typically, such solutions exist for
$\omega$ from an interval or a collection of intervals
of the real line.
We denote the set of all solitary waves by ${\mathcal{S}\sb{0}}$.

Due to the $\mathbf{U}(1)$-invariance
of the equations,
the factor-space $\mathcal{S}\sb{0}/\mathbf{U}(1)$
in a generic situation
is isomorphic to a f\/inite union of intervals.
Let us mention
that there are numerous results on the existence of
solitary wave solutions
to nonlinear Hamiltonian systems with $\mathbf{U}(1)$ symmetry.
See e.g. \cite{MR765961,MR847126,MR1344729}.

%\subsubsection*{Orbital stability of solitary waves}

While all localized stationary solutions
to the nonlinear wave equations
in spatial dimensions $n\ge 3$ turn out to be unstable
(the result known as ``Derrick's theorem'' \cite{MR30:4510}),
\emph{quasistationary} solitary waves can be orbitally stable.
Stability of solitary waves
takes its origin from \cite{VaKo}
and has been extensively studied by Strauss and his school
in \cite{MR901236,MR723756,MR792821,MR804458}.

\subsubsection*{Local attraction to solitary waves}

First results on the asymptotics of type (\ref{asymptotics})
with $\omega\sb\pm\ne 0$
were obtained for nonlinear $\mathbf{U}(1)$-invariant
Schr\"o\-din\-ger equations
in the context of asymptotic stability.
This establishes asymptotics of type~(\ref{asymptotics})
but only for solutions close to the solitary waves,
proving the existence of a \emph{local attractor}.
This was f\/irst done
by Sof\/fer and Weinstein and by Buslaev and Perelman
in~\cite{MR1199635e,MR1334139,MR1071238,MR1170476},
and then developed in~\cite{MR1972870,MR1893394,MR1835384,MR2027616,MR1488355,MR1681113}
and other papers.

\subsubsection*{Global attraction to solitary waves}

The \emph{global attraction}
of type (\ref{asymptotics})
with $\psi\sb\pm\ne 0$ and $\omega\sb{\pm}=0$
was established in certain models
in
%\cite{MR1203302e,MR1359949,MR1412428,MR1434147,MR1726676,MR1748357}
\cite{MR1726676,MR1748357,MR1434147,MR1412428,MR1203302e,MR1359949}
for a number of nonlinear
wave problems.
There the attractor
is the set of all \emph{static} stationary states.
Let us mention that this set could be inf\/inite
and contain continuous components.

In \cite{MR2032730} and \cite{ubk-arma},
the attraction to the set of solitary waves
(see Fig.~\ref{fig-attractor})
is proved
for the Klein--Gordon f\/ield coupled to a nonlinear oscillator.
In \cite{ukk-mpi}, this result has been generalized
for the Klein--Gordon f\/ield coupled to several oscillators.
The paper~\cite{ukr-mpi} gives the extension to the higher-dimensional setting
for a model with the nonlinear self-interaction
of the mean f\/ield type.
We are going to describe these results in this survey.

\begin{figure}[htbp]
%\vspace*{-12mm}
%\input komech-fig-attractor.tex
%\vspace{-6mm}
\centerline{\includegraphics{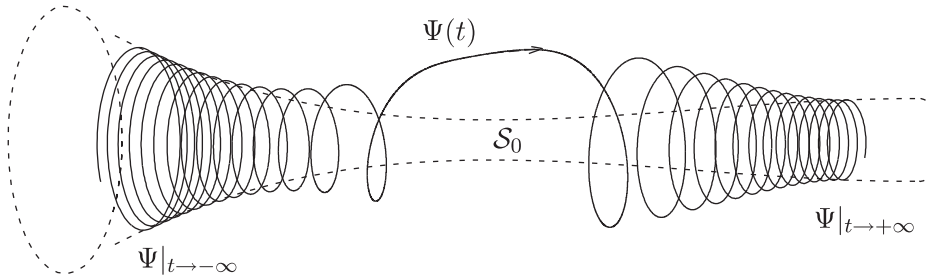}}

\caption{For $t\to\pm\infty$,
a f\/inite energy solution $\Psi(t)$
approaches the global attractor
$\mathscr{A}$
which coincides with the set of all solitary waves
$\mathcal{S}\sb 0$.}
\label{fig-attractor}
\end{figure}

We are aware of but one recent advance \cite{MR2304091}
in the f\/ield
of nontrivial (nonzero) global attractors for Hamiltonian PDEs.
In that paper, the global attraction for the nonlinear Schr\"odinger equation
in dimensions $n\ge 5$ was considered.
The dispersive (outgoing) wave was explicitly specif\/ied
using the rapid decay of local energy in higher dimensions.
The global attractor was proved to be compact, but it was
neither identif\/ied with the set of solitary waves nor was proved
to be f\/inite-dimensional \cite[Remark 1.18]{MR2304091}.

\section{Assumptions and results}
\label{sect-results}

In \cite{ubk-arma,ukk-mpi,ukr-mpi}
we introduce the models
which possess the key properties we mentioned above:
$\mathbf{U}(1)$-invariance, dispersive character,
and the nonlinearity.
The models allow to prove the global attraction to solitary
waves and to develop certain techniques
which we hope will allow us to approach more general systems.

\subsection*{Model 1: Klein--Gordon f\/ield with a nonlinear oscillator}

We consider the Cauchy problem for the Klein--Gordon equation
with the nonlinearity concentrated at the origin:
\begin{gather}
\ddot\psi(x,t)
=\psi''(x,t)-m^2\psi(x,t)
+\delta(x)F(\psi(0,t)),
\qquad
x\in\R,
%\quad t\in\R,
\nonumber\\
\psi\at{t=0}=\psi\sb{0}(x),
\qquad
\dot\psi\at{t=0}=\pi\sb{0}(x).\label{kg-1o}
\end{gather}
Above,
$m>0$
and
$F$ is a function
describing an oscillator at the point $x=0$.
The dots stand for the derivatives in $t$,
and the primes for the derivatives in $x$.
All derivatives and the equation are understood in
the sense of distributions.
We assume that equation (\ref{kg-1o}) is
$\mathbf{U}(1)$-invariant;
that is,
\begin{gather}\label{inv-f}
F(e\sp{i\theta}\psi)=e\sp{i\theta}F(\psi),
\qquad\theta\in\R,\quad\psi\in\C.
\end{gather}
If we identify a complex number $\psi=u+i v\in\C$
with the two-dimensional vector
$(u,v)\in\R\sp 2$,
then, physically, equation (\ref{kg-1o}) describes small crosswise
oscillations of the inf\/inite
string in three-dimensional space
$(x,u,v)$
stretched along the $x$-axis.
The string is subject to
the action of
an ``elastic force'' $-m^2\psi(x,t)$ and
coupled to a nonlinear oscillator
of force $F(\psi)$
attached at the point $x=0$.
We assume that
the oscillator force $F$ admits a real-valued potential,{\samepage
\begin{gather*}
F(\psi)=-\nabla\sb\psi U(\psi),\qquad\psi\in\C,
\quad
U\in C\sp 2(\C),
\end{gather*}
where the gradient is taken
with respect to $\Re\psi$ and $\Im\psi$.}

\begin{remark}
Viewing the model as an inf\/inite string in $\R\sp{3}$,
the assumption (\ref{inv-f})
means that the potential $U(\psi)$
is rotation-invariant with respect to the $x$-axis.
\end{remark}
The codes for the numerical simulation
of a f\/inite portion of a string
coupled to an oscillator,
with transparent boundary conditions
($\p\sb t\psi=\pm\p\sb x\psi$),
are available in \cite{kg-string}.
%{\tt http://www.math.tamu.edu/\~{}comech/tools/kg-string}

\subsection*{Model 2: Klein--Gordon f\/ield with
several nonlinear oscillators}

More generally,
we consider the Cauchy problem for the Klein--Gordon equation
with the nonlinearity concentrated at the points
$X\sb{1}<X\sb{2}<\dots<X\sb{N}$:
\begin{gather}
\ddot\psi(x,t)
=\psi''(x,t)-m^2\psi(x,t)
+\sum\sb{J=1}\sp{N}
\delta(x-X\sb{J})F\sb{J}(\psi(X\sb{J},t)),
\qquad
x\in\R,
%\quad t\in\R,
\nonumber\\
\psi\at{t=0}=\psi\sb{0}(x),
\qquad
\dot\psi\at{t=0}=\pi\sb{0}(x).\label{kg-no}
\end{gather}

\subsection*{Model 3: Klein--Gordon f\/ield
with the mean f\/ield interaction}

We also consider the Klein--Gordon equation
with the mean f\/ield interaction:
\begin{gather}
\ddot\psi(x,t)
=\Delta\psi(x,t)-m^2\psi(x,t)
+\rho(x)F(\langle\rho,\psi(\cdot,t)\rangle),
\qquad
x\in\R^n,
\quad n\ge 1,
%\quad t\in\R,
\nonumber\\
\psi\at{t=0}=\psi\sb 0(x),
\qquad
\dot\psi\at{t=0}=\pi\sb 0(x).\label{kg-mf}
\end{gather}
Above, $\rho$ is a smooth
real-valued
coupling function
from the Schwartz class:
$\rho\in\mathscr{S}(\R^n)$, $\rho\not\equiv 0$,
and
\[
\langle\rho,\psi(\cdot,t)\rangle=\int\sb{\R^n}
\rho(x)\psi(x,t)\,dx.
\]

\subsection*{Hamiltonian structure}

Equations (\ref{kg-no}),
(\ref{kg-mf})
formally can be written as a Hamiltonian system,
\begin{gather}\label{kg-h}
\dot\Psi(t)=\mathcal{J}\,D\mathcal{H}(\Psi),
\qquad
\mathcal{J}=\left[\begin{array}{cc}0&1\\-1&0\end{array}\right],
\end{gather}
where
$\Psi=(\psi,\pi)$
and
$D\mathcal{H}$ is the Fr\'echet derivative of the Hamilton functionals
\begin{gather*}%\label{hamiltonian}
\mathcal{H}\sb{\rm osc}(\psi,\pi)
=\frac 1 2
\int\limits\sb{\R}
\left(
\abs{\pi}\sp 2+\abs{\psi'}\sp 2+m^2\abs{\psi}\sp 2
\right)
dx
+\sum\sb{J} U\sb{J}(\psi(X\sb{J})),
\\
%\label{hamiltonian-mf}
\mathcal{H}\sb{\rm m.f.}(\psi,\pi)
=\frac 1 2
\int\limits\sb{\R^n}
\left(
\abs{\pi}\sp 2+\abs{\nabla\psi}\sp 2+m^2\abs{\psi}\sp 2
\right)
dx
+U(\langle\rho,\psi\rangle).
\end{gather*}
Since (\ref{kg-no}) and (\ref{kg-mf})
are $\mathbf{U}(1)$-invariant,
the N\"other theorem
formally implies that the value of the {\it charge functional}
\begin{gather*}%\label{cal-Q}
\mathcal{Q}(\psi,\pi)
=\frac{i}{2}\int
\left(\overline\psi\pi
-\overline\pi\psi\right) dx
\end{gather*}
is conserved for solutions
$\Psi(t)=(\psi(t),\pi(t))$ to (\ref{kg-h}).

Let us introduce
the phase space ${\E}$
of f\/inite energy states for equations
(\ref{kg-no}), (\ref{kg-mf}).
%%AC changed:
Denote by
$\norm{\cdot}\sb{L\sp 2}$
the norm in the complex Hilbert space $L\sp 2(\R^n)$
and by
$\norm{\cdot}\sb{L\sp 2\sb R}$ the norm in
$L\sp 2(\mathbb{B}^n\sb R)$,
where $\mathbb{B}^n\sb R$ is a ball of radius $R>0$.
Denote by
% $\norm{\cdot}\sb{H\sp{s}}$ the norm in the
%Sobolev space of order $s$, $H\sp{s}(\R^n)$,
%and denote by
$\norm{\cdot}\sb{H\sp{s}\sb{R}}$ the norm in the
Sobolev space $H\sp{s}(\mathbb{B}^n\sb{R})$
(which is the dual to the Sobolev space $H\sp{-s}\sb{0}(\mathbb{B}^n\sb{R})$
of functions supported in the ball of radius~$R$).

{\samepage

\begin{definition}[The phase space]
\label{definition-e}\

\begin{enumerate}\itemsep=0pt
\item
$\E=H\sp 1(\R^n)\oplus L\sp 2(\R^n)$,
$n\ge 1$,
is the Hilbert space of the states
$(\psi,\pi)$,
with the norm
\begin{gather*}%\label{def-e}
\norm{(\psi,\pi)}\sb{\E}^2
:=
\norm{ \pi}\sb{L\sp 2}^2
+\norm{\nabla\psi}\sb{L\sp 2}^2+m^2\norm{\psi}\sb{L\sp 2}^2.
\end{gather*}
\item
For $\epsilon\ge 0$, $\E\sp{-\epsilon}=H\sp{1-\epsilon}(\R^n)\oplus H\sp{-\epsilon}(\R^n)$
is the space with the norm
\begin{gather*}%\label{def-e-r-e}
\norm{(\psi,\pi)}\sb{\E\sp{-\epsilon}}
=
\norm{(1-\Delta)^{-\epsilon/2}(\psi,\pi)}\sb{\E}.
\end{gather*}
\item Def\/ine the seminorms
  \[
  \norm{(\psi,\pi)}\sb{\E\sp{-\epsilon},R}^2
  :=
  \norm{\pi}\sb{H\sp{-\epsilon}\sb R}^2
  +
  \norm{\nabla\psi}\sb{H\sp{-\epsilon}\sb R}^2
  +m^2\norm{\psi}\sb{H\sp{-\epsilon}\sb R}^2,
  \qquad R>0.
  \]
  $\E\sp{-\epsilon}\sb F$ is the space with the norm
  \begin{equation}\label{def-e-metric}
  \norm{(\psi,\pi)}\sb{\E\sb F\sp{-\epsilon}}
  =\sum\sb{R=1}\sp{\infty} 2^{-R}\norm{(\psi,\pi)}\sb{{\E\sp{-\epsilon},R}}.
  \end{equation}

%$\E\sp{-\epsilon}\sb F$
%is the space with the Fr\'echet topology def\/ined by the seminorms
%\begin{gather*}%\label{def-e-r-e-r}
%\norm{(\psi,\pi)}\sb{\E\sp{-\epsilon},R}^2
%:=
%\norm{\pi}\sb{H\sp{-\epsilon}\sb R}^2
%+
%\norm{\nabla\psi}\sb{H\sp{-\epsilon}\sb R}^2
%+m^2\norm{\psi}\sb{H\sp{-\epsilon}\sb R}^2,
%\qquad R>0.
%\end{gather*}
\end{enumerate}
\end{definition}

}

%\begin{remark}
%Above,
%$H\sp{s}(\R^n)$ denotes the Sobolev space of order $s$.
%\[
%H\sp{1}=H\sp{1}(\R^n)
%=\{\phi(x)\in L\sp 2(\R^n):\;\nabla\phi\in L\sp 2(\R^n)\}.
%\]
%\end{remark}

%\begin{remark}
%The space $\E\sb F\sp{-\epsilon}$
%is metrizable (but not complete).
%The metric can be introduced by
%\begin{gather}\label{def-e-metric}
%\norm{(\psi,\pi)}\sb{\E\sb F\sp{-\epsilon}}
%=\sum\sb{R=1}\sp{\infty} 2^{-R}\norm{(\psi,\pi)}\sb{{\E\sp{-\epsilon},R}},
%\qquad
%0\le\epsilon\le 1.
%\end{gather}
%\end{remark}

Equations (\ref{kg-no}), (\ref{kg-mf})
are formally Hamiltonian systems
with the Hamilton functionals
$\mathcal{H}\sb{\rm osc}$
and
$\mathcal{H}\sb{\rm m.f.}$, respectively,
and with
the phase space ${\E}$
from Def\/inition~\ref{definition-e}
(for equation (\ref{kg-no}), the dimension is $n=1$).
Both
$\mathcal{H}\sb{\rm osc}$
(or $\mathcal{H}\sb{\rm m.f.}$)
and $\mathcal{Q}$ are continuous functionals on ${\E}$.
%Let us note that
%${\E}={H\sp{1}}\oplus L\sp 2$,

\subsection*{Global well-posedness}

\begin{theorem}[Global well-posedness]
\label{theorem-well-posedness}
Assume that the nonlinearity in \eqref{kg-no}
is given by
%%(\ref{f-is-such-no}) (or (\ref{f-is-such-mf}), respectively)
%%holds.
$F\sb J(z)=-\nabla U\sb J(z)$
with
$\inf\limits\sb{z\in\C}U\sb J(z)>-\infty$,
$1\le J\le N$
(or
$F(z)=-\nabla U(z)$
with
$\inf\limits\sb{z\in\C}U(z)>-\infty$
in \eqref{kg-mf}, respectively).
Then:
\begin{enumerate}\itemsep=0pt
\item
For every $(\psi\sb 0,\pi\sb 0)\in {\E}$
the Cauchy problem
\eqref{kg-no} (\eqref{kg-mf}, respectively)
has a unique global solution
$\psi(t)$ such that $(\psi,\dot\psi)\in C(\R,{\E})$.
\item
The map
$W(t):\;(\psi\sb 0,\pi\sb 0)\mapsto(\psi(t),\dot\psi(t))$
is continuous in ${\E}$
for each $t\in\R$.
\item
The energy and charge are conserved:
$\mathcal{H}(\psi(t),\dot\psi(t))=\const$,
$\mathcal{Q}(\psi(t),\dot\psi(t))=\const$,
$t\in\R$.
\item
The following \emph{a priori} bound holds:
\begin{gather}\label{a-priori}
\norm{(\psi(t),\dot\psi(t))}\sb{\E}
\le C(\psi\sb 0,\pi\sb 0),
\qquad
t\in\R.
\end{gather}
\item
For any $\epsilon\in[0,1]$,
\begin{gather*}%\label{psi-holder}
(\psi,\dot\psi)
\in C\sp{(\epsilon)}(\R,\E\sp{-\epsilon}),
\end{gather*}
where $C\sp{(\epsilon)}$ denotes the space of H\"older-continuous functions.
\end{enumerate}
\end{theorem}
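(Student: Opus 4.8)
The plan is to treat the equations \eqref{kg-no} and \eqref{kg-mf} as perturbations of the free Klein--Gordon dynamics by a bounded (actually locally-supported or mean-field) nonlinear term, and to run a standard contraction-mapping / energy-estimate argument in the space $\E$. I will do the oscillator case \eqref{kg-no} in detail; the mean-field case \eqref{kg-mf} is entirely analogous with $\delta(x-X_J)F_J(\psi(X_J))$ replaced by $\rho(x)F(\langle\rho,\psi\rangle)$, and is in fact a bit easier because $\rho$ is smooth.

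\textbf{Step 1: Duhamel formulation and local existence.} Write $\Psi=(\psi,\dot\psi)$ and $\Psi_0=(\psi_0,\pi_0)$. The free Klein--Gordon propagator $U_0(t)=e^{t\mathcal{A}_0}$, where $\mathcal{A}_0=\mathcal J D\mathcal H_{\rm free}$, is a strongly continuous unitary group on $\E$ (this is classical, via the Fourier transform: $\sqrt{\abs\xi^2+m^2}$ is the natural multiplier). Then $\Psi(t)$ solving \eqref{kg-no} is equivalent to the integral equation
\[
\Psi(t)=U_0(t)\Psi_0+\int_0^t U_0(t-s)\,G(\Psi(s))\,ds,
\qquad
G(\psi,\pi)=\Big(0,\ \textstyle\sum_J \delta(x-X_J)F_J(\psi(X_J))\Big).
\]
The crucial embedding $H^1(\R)\hookrightarrow C_b(\R)$ (one space dimension!) shows the point evaluations $\psi\mapsto\psi(X_J)$ are bounded on $\E$, and since $F_J\in C^1$, the map $G:\E\to\E^{-\epsilon}$ is continuous and Lipschitz on bounded sets for any $\epsilon>1/2$ (a $\delta$-function lies in $H^{-\epsilon}(\R)$ precisely when $\epsilon>1/2$). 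One then shows $t\mapsto\int_0^t U_0(t-s)G(\Psi(s))\,ds$ actually lands back in $\E$ and is continuous there, using that $U_0(t)$ maps $\E^{-1}\to\E$ with norm growing like $|t|$, or more cleanly by noting the right-hand side solves an inhomogeneous wave equation with a measure source and invoking the explicit d'Alembert-type formula in $1$D. A standard fixed-point argument on $C([0,T],\E)$ for $T$ small (depending only on $\norm{\Psi_0}_\E$) then yields a unique local solution, giving (i) locally and (ii).

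\textbf{Step 2: Conservation laws and the a priori bound.} For smooth initial data one differentiates $\mathcal H_{\rm osc}(\Psi(t))$ and $\mathcal Q(\Psi(t))$ in $t$ and checks they vanish, using the equation and $F_J=-\nabla U_J$; for general data one passes to the limit using (ii), giving (iii). Now comes the point that makes the solution \emph{global}: since $U_J$ is bounded below, say $U_J\ge -c_J$, conservation of $\mathcal H_{\rm osc}$ gives
\[
\tfrac12\norm{\Psi(t)}_\E^2
=\mathcal H_{\rm osc}(\Psi(t))-\textstyle\sum_J U_J(\psi(X_J,t))
\le \mathcal H_{\rm osc}(\Psi_0)+\textstyle\sum_J c_J,
\]
which is \eqref{a-priori}. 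Since the local existence time depends only on $\norm{\Psi_0}_\E$ and this norm stays bounded, the solution extends to all $t\in\R$, completing (i) and establishing (iv).

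\textbf{Step 3: H\"older continuity in $\E^{-\epsilon}$.} Finally, for (v), from the Duhamel formula $\Psi(t)-\Psi(t')=(U_0(t)-U_0(t'))\Psi_0+\int$-terms. The free group satisfies $\norm{(U_0(t)-U_0(t'))\Phi}_{\E^{-\epsilon}}\le C|t-t'|^{\epsilon}\norm{\Phi}_\E$ for $\epsilon\in[0,1]$ — an elementary consequence of $|e^{i\lambda t}-e^{i\lambda t'}|\le 2^{1-\epsilon}(\lambda|t-t'|)^{\epsilon}$ applied to the multiplier $\lambda=\sqrt{\abs\xi^2+m^2}$ together with the a priori bound from (iv). The Duhamel integral term is even Lipschitz in $t$ with values in $\E^{-\epsilon}$ because its integrand is bounded in $\E^{-\epsilon}$ (the $\delta$-source lies in $H^{-\epsilon}$, $\epsilon>1/2$, hence in particular for all $\epsilon\in[1/2,1]$; for $\epsilon<1/2$ one uses that the solution is already known to be in $C(\R,\E)$ from Step 2). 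Combining gives $(\psi,\dot\psi)\in C^{(\epsilon)}(\R,\E^{-\epsilon})$.

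\textbf{Main obstacle.} The only genuinely delicate point is Step 1: verifying that the Duhamel integral with the singular source $\delta(x-X_J)F_J(\psi(X_J))$ — which a priori only makes sense in $\E^{-\epsilon}$ — in fact returns to the energy space $\E$ and depends continuously on time there. In one dimension this is true and can be seen from the explicit fundamental solution of $\p_t^2-\p_x^2+m^2$ (a bounded function plus a locally-$H^1$ correction), so $\int_0^t U_0(t-s)\delta(\cdot-X_J)\,h(s)\,ds$ is continuous with values in $H^1_{\rm loc}$ and has finite energy norm; the reduction of \eqref{kg-no} to a delay ODE for $y_J(t):=\psi(X_J,t)$ gives an alternative self-contained route. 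For the mean-field model \eqref{kg-mf} this difficulty disappears entirely since $\rho\in\mathscr S(\R^n)$ makes $G$ map $\E\to\E$ directly, and the same three steps go through verbatim, the a priori bound again following from boundedness below of $U$.
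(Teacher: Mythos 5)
The paper does not actually prove Theorem~\ref{theorem-well-posedness} in the text: it defers entirely to \cite{ubk-arma,ukk-mpi,ukr-mpi}. Your strategy is the same standard one used in those references --- local solvability via the embedding $H^1(\R)\hookrightarrow C_b(\R)$ and a contraction argument (equivalently, reduction to a Volterra equation for $y_J(t)=\psi(X_J,t)$ built on the explicit one-dimensional fundamental solution, which is exactly what the splitting $\psi=\chi+\varphi$ in Section~\ref{sect-proof} relies on), conservation of $\mathcal H$ and $\mathcal Q$, the a priori bound \eqref{a-priori} from $\inf U_J>-\infty$, and continuation --- so the architecture is right. Two details need repair. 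First, your claim that $U_0(t)$ maps $\E\sp{-1}\to\E$ with norm $O(\abs{t})$ is false: the free Klein--Gordon group is unitary on every $\E\sp{-\epsilon}$ and does not smooth, and even pointwise in $s$ the integrand $U_0(t-s)\bigl(0,\delta(\cdot-X_J)h(s)\bigr)$ lies only in $H^{s}\oplus H^{s-1}$ for $s<1/2$, not in $\E$. The fact that the Duhamel integral nevertheless returns to $\E$ is a genuine inhomogeneous smoothing effect; your alternative justification via the explicit representation $\varphi(x,t)=\tfrac12\int_0^{t-\abs{x-X_J}}J_0\bigl(m\sqrt{(t-s)^2-(x-X_J)^2}\bigr)h(s)\,ds$ is the correct one and must carry the argument (its $x$-derivative is bounded and supported in the light cone, whence $\varphi(\cdot,t)\in H^1$). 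Second, in part~(\emph{v}) for $\epsilon\in(0,1/2]$ the source $\delta(x-X_J)F_J$ does not belong to $H^{-\epsilon}$, and invoking ``the solution is already in $C(\R,\E)$'' yields continuity but not H\"older-$\epsilon$ continuity; the clean fix is to observe that $\dot\Psi=\mathcal J\,D\mathcal H(\Psi)$ is bounded in $\E\sp{-1}$ by \eqref{a-priori}, so $\Psi\in\mathrm{Lip}(\R,\E\sp{-1})$, and then to interpolate, $\norm{u}\sb{\E\sp{-\epsilon}}\le\norm{u}\sb{\E}^{1-\epsilon}\norm{u}\sb{\E\sp{-1}}^{\epsilon}$ with $u=\Psi(t)-\Psi(t')$, against the uniform bound (\emph{iv}). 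With these two repairs the proof is complete, and, as you note, the mean-field case \eqref{kg-mf} is easier since there $G:\E\to\E$ is locally Lipschitz outright.
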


The proof is contained in
\cite{ubk-arma, ukk-mpi}, and \cite{ukr-mpi}.

\subsection*{Solitary waves}

\begin{definition}[Solitary waves]
\label{def-solitary-waves} \

\begin{enumerate}\itemsep=0pt
\item
The solitary waves of equation (\ref{kg-no})
are solutions of the form
\begin{gather}\label{solitary-waves}
\psi(x,t)=\phi\sb\omega(x)e\sp{-i\omega t},
\qquad
{\rm where}
\quad
\omega\in\R,
\quad
\phi\sb\omega\in H\sp{1}(\R^n).
\end{gather}
\item
The set of all solitary waves
is
$
\mathcal{S}\sb{0}
=
\left\{
\phi\sb\omega
\sothat\omega\in\R,\ \phi\sb\omega\in H\sp 1(\R^n)
\right\}$.
\item
The solitary manifold
is the set
$
\mathcal{S}
=
\left\{
(\phi\sb\omega,-i\omega\phi\sb\omega)
\sothat\omega\in\R,\ \phi\sb\omega\in H\sp 1(\R^n)
\right\}
\subset\E$.
\end{enumerate}
\end{definition}

\begin{remark} \
\begin{enumerate}\itemsep=0pt
\item
The $\mathbf{U}(1)$ invariance of
(\ref{kg-no}) and (\ref{kg-mf})
implies that the sets
$\mathcal{S}\sb{0}$,
$\mathcal{S}$
are invariant under multiplication by $e\sp{i\theta}$,
$\theta\in\R$.
\item
Let us note that for any $\omega\in\R$
there is a zero solitary wave with
$\phi\sb\omega(x)\equiv 0$
since $F\sb{J}(0)=0$.
%\item
%There can be no nonzero solitary waves
%corresponding to $\abs{\omega}>m$.
\end{enumerate}
\end{remark}

The following proposition provides a concise description
of all solitary wave solutions to (\ref{kg-1o}).

\begin{proposition}
\label{prop-solitons}
There are no nonzero solitary waves
for $\abs{\omega}\ge m$.

For a particular $\omega\in(-m,m)$,
there is a nonzero solitary wave solution
to \eqref{kg-1o}
if and only if
there exists $C\in\C\backslash \{0\}$
so that
\begin{gather}\label{kaka}
%\kappa^2=m^2-\omega^2,
%\qquad
2\kappa(\omega)=F(C)/C.
\end{gather}
The solitary wave solution is given by
\begin{gather}\label{solitary-wave-profile}
\phi\sb\omega(x)
=C e^{-\kappa(\omega)\abs{x}},
\qquad
\kappa(\omega)=\sqrt{m^2-\omega^2}.
\end{gather}
\end{proposition}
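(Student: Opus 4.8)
The plan is to substitute the solitary-wave ansatz into \eqref{kg-1o} and reduce the problem to an elementary matching argument for an ODE on $\R\setminus\{0\}$. Inserting $\psi(x,t)=\phi\sb\omega(x)e^{-i\omega t}$ into \eqref{kg-1o} and using that $H^1(\R)$ embeds continuously into $C(\R)\cap L^\infty(\R)$ (so $\phi\sb\omega(0)$ and the $\delta$-term are well defined), one obtains the stationary equation
\[
\phi\sb\omega''=(m^2-\omega^2)\phi\sb\omega-F(\phi\sb\omega(0))\,\delta(x)
\qquad\text{in }\mathscr{D}'(\R).
\]

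Away from the origin this is the constant-coefficient equation $\phi''=(m^2-\omega^2)\phi$, and I would split according to the sign of $m^2-\omega^2$. If $\abs\omega\ge m$, every solution on each of the half-lines $(0,\infty)$ and $(-\infty,0)$ is oscillatory (when $\abs\omega>m$) or affine (when $\abs\omega=m$), and none of these is square-integrable near $\pm\infty$ unless it vanishes identically; since $\phi\sb\omega\in L^2(\R)$ this forces $\phi\sb\omega\equiv 0$, which proves the first assertion. If $\abs\omega<m$, set $\kappa=\kappa(\omega)=\sqrt{m^2-\omega^2}>0$; the $L^2$ condition at $+\infty$, respectively $-\infty$, forces $\phi\sb\omega(x)=C\sb+e^{-\kappa x}$ for $x>0$ and $\phi\sb\omega(x)=C\sb-e^{\kappa x}$ for $x<0$, and continuity at $x=0$ gives $C\sb+=C\sb-=:C=\phi\sb\omega(0)$.

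Next I would extract the matching condition at the origin. Pairing the stationary equation with a test function supported near $0$ shows that $\phi\sb\omega'$ has a jump at $0$ equal to $-F(C)$, i.e. $\phi\sb\omega'(0^+)-\phi\sb\omega'(0^-)=-F(C)$. From $\phi\sb\omega(x)=Ce^{-\kappa\abs x}$ one reads off $\phi\sb\omega'(0^\pm)=\mp\kappa C$, hence $-2\kappa C=-F(C)$, that is $2\kappa(\omega)C=F(C)$. A nonzero solitary wave therefore requires $C\neq 0$ and $2\kappa(\omega)=F(C)/C$, which is \eqref{kaka}; note that by the $\mathbf{U}(1)$-invariance \eqref{inv-f} the quotient $F(C)/C$ depends only on $\abs C$, and \eqref{kaka} in particular forces it to be real and positive.

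For the converse, given $\omega\in(-m,m)$ and $C\in\C\backslash\{0\}$ with $2\kappa(\omega)=F(C)/C$, I would simply put $\phi\sb\omega(x)=Ce^{-\kappa(\omega)\abs x}$ as in \eqref{solitary-wave-profile}. Then $\phi\sb\omega\in H^1(\R)$ (exponential decay, with weak derivative $-\kappa C\,\sgn(x)e^{-\kappa\abs x}\in L^2(\R)$), $\phi\sb\omega(0)=C$, and a direct computation yields $\phi\sb\omega''=\kappa^2\phi\sb\omega-2\kappa C\,\delta(x)=(m^2-\omega^2)\phi\sb\omega-F(C)\,\delta(x)$ in $\mathscr{D}'(\R)$, so $\psi(x,t)=\phi\sb\omega(x)e^{-i\omega t}$ solves \eqref{kg-1o}. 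The only delicate point is the bookkeeping of the distributional identity and the use of the embedding $H^1(\R)\hookrightarrow C(\R)\cap L^\infty(\R)$ to justify evaluation at $x=0$ and the decay/continuity statements; past that, the argument is routine ODE matching and presents no real obstacle.
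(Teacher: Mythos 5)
Your argument is correct and follows essentially the same route as the paper's proof: substitute the ansatz to get the stationary equation, solve the constant-coefficient ODE away from the origin, use the $H^1$ (decay) requirement to exclude $\abs\omega\ge m$ and to pin down the profile $Ce^{-\kappa(\omega)\abs{x}}$, and then read off \eqref{kaka} from the jump of $\phi\sb\omega'$ at $x=0$. The extra details you supply (the sign bookkeeping of the jump, the $H^1\hookrightarrow C\cap L^\infty$ embedding, and the verification of the converse) are consistent with, and slightly more explicit than, the paper's version.
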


\begin{remark}
There could be more than one value $C>0$
satisfying (\ref{kaka}).
\end{remark}

\begin{remark}\label{remark-zero}
By (\ref{solitary-wave-profile}),
$\omega=\pm m$ can not correspond to a nonzero solitary wave.
\end{remark}

\begin{proof}
When we substitute the ansatz $\phi\sb\omega e^{-i\omega t}$
into (\ref{kg-1o}),
we get the following relation:
\begin{gather}\label{NEP}
-\omega\sp 2\phi\sb\omega(x)
=
\phi\sb\omega''(x)-m^2\phi\sb\omega(x)
+\delta(x)F(\phi\sb\omega(x)),\qquad x\in\R.
\end{gather}
The phase factor $e^{-i\omega t}$
has been canceled out.
Equation (\ref{NEP}) implies that away from the origin
we have
\[
\phi\sb\omega''(x)=(m^2-\omega^2)\phi\sb\omega(x),
\qquad
x\ne 0,
\]
hence
$\phi\sb\omega(x)=C\sb\pm e\sp{-\kappa\sb\pm\abs{x}}$ for $\pm x>0$,
where $\kappa\sb\pm$
satisfy
$\kappa\sb\pm^2=m^2-\omega\sp 2$.
Since $\phi\sb\omega(x)\in H\sp 1$,
it is imperative that $\kappa\sb\pm>0$;
we conclude that
$\abs{\omega}<m$
and that
$\kappa\sb\pm=\sqrt{m^2-\omega^2}>0$.
Moreover, since
the function $\phi\sb\omega(x)$ is continuous,
$C\sb{-}=C\sb{+}=C\ne 0$
(since we are looking for nonzero solitary waves).
We see that
\begin{gather}\label{profile}
\phi\sb\omega(x)=C e\sp{-\kappa\abs{x}},
\qquad
C\ne 0,
\qquad
\kappa\equiv\sqrt{m^2-\omega^2}>0.
\end{gather}
Equation (\ref{NEP}) implies the following jump condition at $x=0$:
\begin{gather*}%\label{ais}
0=\phi\sb\omega'(0+)-\phi\sb\omega'(0-)+F(\phi\sb\omega(0)),
\end{gather*}
which is satisf\/ied due to (\ref{kaka}) and (\ref{profile}).
\end{proof}

\subsection*{Global attraction to solitary waves}

We will combine the results
for the Klein--Gordon equation
with one and several oscillators ((\ref{kg-1o}) and (\ref{kg-no})).

\begin{theorem}[Global attraction
for (\ref{kg-no}), Klein--Gordon equation with $N$ oscillators]
% to solitons for equation
\label{main-theorem-no}
Assume that
all the oscillators are strictly nonlinear:
%(\ref{f-is-such-no}) is satisfied.
for all $1\le J\le N$,
\begin{gather}\label{f-is-such-no}
F\sb{J}(\psi)=-\nabla U\sb{J}(\psi),
\\
{\rm where}\qquad
U\sb{J}(\psi)=\sum\limits\sb{l=0}\sp{p\sb{J}}u\sb{J,l}
\abs{\psi}\sp{2l},
\quad
u\sb{J,l}\in\R,
\qquad
u\sb{J,p\sb{J}}>0,
\quad
{\rm and}
\quad
p\sb{J}\ge 2.\nonumber
\end{gather}
Further,
if $N\ge 2$,
assume that
the intervals
$[X\sb{J},X\sb{J+1}]$, $\ 1\le J\le N-1$,
are small enough so that
\begin{gather}\label{delta-small}
\min\sb{1\le J\le N-1}
\left(\frac{\pi^2}{\abs{X\sb{J+1}-X\sb{J}}^2}+m^2\right)^{1/2}
%>\Lambda,
>
m
\max\sb{1\le J\le N}
%(2p\sb{J}-1)\min(\mu\sb{J},\mu\sb{J}')\Big),
%\min(M\sb{J},M\sb{J}'),
\min\left(
\prod\limits\sb{l=1}\sp{J}(2p\sb l-1),
\prod\limits\sb{l=J}\sp{N}(2p\sb l-1)
\right),
%\qquad
%1\le J\le N-1.
\end{gather}
where $p\sb{J}$ are exponentials from \eqref{f-is-such-no}.
Then for any $(\psi\sb{0},\pi\sb{0})\in\E$
the solution $\psi(t)$
to the Cauchy problem \eqref{kg-no}
converges to
$\mathcal{S}$:
\begin{gather}\label{cal-A-no}
\lim\sb{t\to\pm\infty}
\dist\sb{{\E}\sb F}((\psi(t),\dot\psi(t)),\mathcal{S})=0.
\end{gather}
\end{theorem}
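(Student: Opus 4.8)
The plan is to reduce \eqref{cal-A-no} to a statement about individual \emph{omega-limit trajectories} of the solution and then to identify every such trajectory with a solitary wave by a nonlinear spectral analysis. By Theorem~\ref{theorem-well-posedness} the solution stays in a bounded ball of $\E$ and lies in $C^{(\epsilon)}(\R,\E^{-\epsilon})$; combining the \emph{a priori} bound \eqref{a-priori} with the Rellich compactness of the local embedding of $\E$ into $\E^{-\epsilon}$, the trajectory $\{(\psi(t),\dot\psi(t)):t\ge 0\}$ is precompact in $\E^{-\epsilon}_F$, so along any sequence $t_j\to+\infty$ a subsequence of the time shifts $(\psi(t_j+\cdot),\dot\psi(t_j+\cdot))$ converges in $C(\R,\E^{-\epsilon}_F)$ to a limiting finite energy solution $(\beta,\dot\beta)$ of \eqref{kg-no} which is bounded, $\sup_{t\in\R}\norm{(\beta(t),\dot\beta(t))}_{\E}<\infty$. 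Taking $\epsilon<1/2$ the traces converge locally uniformly, so one may pass to the limit in the nonlinear term. A standard contradiction argument then shows that \eqref{cal-A-no} follows once we prove that every omega-limit trajectory $\beta$ is a solitary wave, $(\beta(0),\dot\beta(0))\in\mathcal S$; the explicit exponentially decaying profile \eqref{solitary-wave-profile} of the limit is used here to upgrade the convergence from $\E^{-\epsilon}_F$ to $\E_F$ (no energy escapes to spatial infinity along the sequence).

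Next I would analyse an omega-limit trajectory. Write $g_J(t):=\beta(X_J,t)$, which is bounded and continuous since $H^1(\R)\hookrightarrow C_b(\R)$, and $f_{J,\infty}(t):=F_J(g_J(t))$; then $\beta$ solves the free Klein--Gordon equation with the localized sources $\sum_J\delta(x-X_J)f_{J,\infty}(t)$, and outside $[X_1,X_N]$ it solves the free equation. The component of the original solution generated by $(\psi_0,\pi_0)$ decays locally, so only the source-generated part matters; using that the energy radiated by $\psi$ to spatial infinity is finite (by \eqref{a-priori}) together with the explicit Fourier representation of the free dynamics on the half-lines $x<X_1$ and $x>X_N$, one shows each trace $\psi(X_J,\cdot)$ decomposes as $h_J+r_J$ with $\supp\hat h_J\subseteq[-m,m]$ and $r_J$ of finite ``dispersive'' norm (a weighted $L^2$-in-time quantity controlled by the radiated energy). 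Since time shifts of $r_J$ tend weakly to $0$, the omega-limit traces satisfy $\supp\hat g_J\subseteq[-m,m]$, $1\le J\le N$; in particular the $\hat g_J$ and, by the polynomial structure \eqref{f-is-such-no} of $F_J$, also the $\hat f_{J,\infty}$ are compactly supported quasimeasures.

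Finally, the linear equation for $\beta$ relates the transforms by $\hat g_J(\omega)=\sum_K\mathcal G_{JK}(\omega)\hat f_{K,\infty}(\omega)$ with $\mathcal G_{JK}(\omega)=\frac{e^{-\kappa(\omega)\abs{X_J-X_K}}}{2\kappa(\omega)}$, $\kappa(\omega)=\sqrt{m^2-\omega^2}$. By the Titchmarsh convolution theorem applied to the top-degree term of $F_J$, whose coefficient $-2p_J u_{J,p_J}$ is nonzero (hence strictly dominates the spectral support of the lower order terms), if $\hat g_J$ were supported on more than one point then
\begin{gather*}
\sup\supp\hat f_{J,\infty}=p_J\sup\supp\hat g_J-(p_J-1)\inf\supp\hat g_J>\sup\supp\hat g_J,
\end{gather*}
and symmetrically for the infimum. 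For $N=1$ this immediately contradicts $\supp\hat f_{\infty}\subseteq\supp\hat g$, forcing $\hat g$ to be a single Dirac mass, $g(t)=Ce^{-i\omega_+ t}$; by Remark~\ref{remark-zero} either $C=0$ or $\omega_+\in(-m,m)$, so $\beta\in\mathcal S$. For $N\ge 2$ one propagates the single-point property along the chain $X_1<\dots<X_N$: on each segment $[X_J,X_{J+1}]$ the transfer of the relation $\hat g_J=\sum_K\mathcal G_{JK}\hat f_{K,\infty}$ is invertible on the frequency range below the Dirichlet threshold $\big(\pi^2/\abs{X_{J+1}-X_J}^2+m^2\big)^{1/2}$, while crossing $j$ consecutive oscillators inflates the spectral support by the compounded factor $\prod(2p_l-1)$; condition \eqref{delta-small} is precisely what forces the former to dominate the latter, so that a single frequency at one oscillator propagates to one common frequency $\omega_+\in(-m,m)$ at all of them, whence $\beta\in\mathcal S$.

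The main obstacle lies in these last two steps. Extracting from the finite-energy bound the precise ``dispersive absorption'' statement $\supp\hat g_J\subseteq[-m,m]$ is where the local energy decay of the free Klein--Gordon equation really enters, and it requires careful Fourier analysis of the traces together with the weak convergence along $t_j\to+\infty$. Running the Titchmarsh argument through the coupling matrix $\mathcal G_{JK}(\omega)$ for $N\ge 2$ is delicate because the spectral inflation produced by the $N$ nonlinearities must be kept under control across the oscillators; the accounting via the products $\prod_{l=1}^J(2p_l-1)$ and $\prod_{l=J}^N(2p_l-1)$ and the geometric smallness condition \eqref{delta-small} are exactly what make this closure work.
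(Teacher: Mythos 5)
Your plan follows essentially the same route as the paper's own proof: extract omega-limit trajectories by compactness, split off the dispersive component generated by the initial data, use the Fourier--Laplace transform and a Paley--Wiener/limiting-absorption argument to show the spectrum outside $[-m,m]$ is absolutely continuous and hence absent from the limit trajectory, and then collapse the remaining compact spectrum to a single point via the Titchmarsh convolution theorem applied to the polynomial nonlinearity. The two places you flag as obstacles are exactly the places the paper itself defers (the upgrade from $\E\sp{-\epsilon}\sb F$ to $\E\sb F$ convergence is sent to \cite{ubk-arma}, and the $N\ge 2$ bookkeeping with the coupling matrix and condition (\ref{delta-small}) to \cite{ukk-mpi}), so your proposal matches the paper's argument in both structure and level of detail.
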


\begin{theorem}[Global attraction for (\ref{kg-mf}),
Klein--Gordon equation with mean f\/ield interaction]
\label{main-theorem-mf}
%Assume that (\ref{f-is-such-mf}) holds.
Assume that the nonlinearity $F(z)$
is strictly nonlinear:
\begin{gather*}%\label{f-is-such-mf}
F(z)=-\nabla U(z),
\qquad
{\rm where}
\quad
U(z)=\sum\sb{l=0}\sp{p}
u\sb l\abs{z}^{2l},
\quad
u\sb l\in\R,
\quad
u\sb{p}>0,
\quad
{\rm and}
\quad
p\ge 2.\!\!
\end{gather*}
Further, assume that the set
\begin{gather*}%\label{def-Omega}
Z\sb\rho
=\{\omega\in\R\backslash[-m,m]\sothat
\hat\rho(\xi)=0\ {\rm for\ all\ }\xi\in\R^n
\ {\rm such\ that\ }m^2+\xi^2=\omega^2\}
\end{gather*}
is finite
and that
\begin{gather*}%\label{def-sigma}
\sigma(\omega)
:=\frac{1}{(2\pi)^n}
\int\sb{\R^n}\frac{\abs{\hat\rho(\xi)}^2}{\xi^2+m^2-\omega^2}
\,d\sp n\xi
\end{gather*}
does not vanish
at the points $\omega\in Z\sb\rho$.
Then for any $(\psi\sb 0,\pi\sb 0)\in \E$
the solution $\psi(t)\in C(\R,\E)$
to the Cauchy problem \eqref{kg-mf}
converges to $\mathcal{S}$ in the space
$\E\sp{-\epsilon}\sb{F}$,
%=H\sp{1-\epsilon}\sb{loc}(\R^n)\times H\sp{-\epsilon}\sb{loc}(\R^n)$,
for any $\epsilon>0$:
\begin{gather}\label{cal-A-mf}
\lim\sb{t\to\pm\infty}
\dist\sb{\E\sp{-\epsilon}\sb{F}}(\Psi(t),\mathcal{S})=0.
\end{gather}
\end{theorem}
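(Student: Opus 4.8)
The plan is to adapt the argument devised in~\cite{ubk-arma,ukk-mpi} for the oscillator models~\eqref{kg-1o}--\eqref{kg-no}; the only structural difference is that the nonlinearity now enters through the single scalar amplitude $y(t):=\langle\rho,\psi(\cdot,t)\rangle$ rather than through the finitely many point values $\psi(X_J,t)$. First I would collect the a priori facts: by Theorem~\ref{theorem-well-posedness} a finite energy solution obeys $\norm{(\psi(t),\dot\psi(t))}_{\E}\le C$ for all $t$, so $y$ is bounded and continuous and, as $F$ is a polynomial, so is the source $g(t):=F(y(t))$. Then~\eqref{kg-mf} becomes the \emph{linear} inhomogeneous Klein--Gordon equation $\ddot\psi=\Delta\psi-m^2\psi+\rho(x)g(t)$, and I would split $\psi=\chi+\varphi$, where $\chi$ solves the \emph{free} Klein--Gordon equation with the Cauchy data of $\psi$ and $\varphi$ is the retarded potential of $\rho(x)g(t)$ with zero data. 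Local energy decay for the free Klein--Gordon group gives $\norm{(\chi(t),\dot\chi(t))}_{\E^{-\epsilon},R}\to 0$ as $t\to\pm\infty$ for every $R$, so the convergence~\eqref{cal-A-mf} is governed entirely by the long-time behaviour of $\varphi$.

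Next I would pass to omega-limit trajectories. Fix $t_j\to+\infty$ and consider the shifts $\psi_j(x,t):=\psi(x,t+t_j)$; using~\eqref{a-priori}, the H\"older continuity in $t$ from Theorem~\ref{theorem-well-posedness}, and the compact embeddings $H^{1}(\mathbb{B}^n_R)\hookrightarrow H^{1-\epsilon}(\mathbb{B}^n_R)$, extract a subsequence along which $\psi_j\to\beta$ in $C_{\rm loc}(\R,\E^{-\epsilon}_F)$. Then $\beta$ solves~\eqref{kg-mf} on all of $\R$, by lower semicontinuity $\norm{(\beta(t),\dot\beta(t))}_{\E}\le C$ for every $t$, and, by the decay of $\chi$, $\beta$ equals its own retarded potential --- it is \emph{nondispersive}. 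Writing $b(t):=\langle\rho,\beta(\cdot,t)\rangle$, $h(t):=F(b(t))$ and taking the Fourier transform in time (legitimate since $b,h$ are bounded, hence tempered), one obtains $\hat\beta(x,\omega)=\hat h(\omega)\,R_\pm(\omega)\rho(x)$, where $R_\pm(\omega)$ is the limiting absorption resolvent of $-\Delta+m^2-\omega^2$; pairing against $\rho$ gives $\hat b(\omega)=\sigma_\pm(\omega)\hat h(\omega)$ with $\sigma_\pm(\omega)=\langle\rho,R_\pm(\omega)\rho\rangle$, and $\sigma_\pm(\omega)=\sigma(\omega)>0$ for $\abs{\omega}<m$. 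I will call $\Lambda:=\supp\hat b$ the spectrum of $\beta$; note $\Lambda\subseteq\supp\hat h$, with equality on $(-m,m)$.

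The heart of the matter is to show that $\Lambda$ is a single point, and this I expect to be the main obstacle. First, $\Lambda$ is compact: if $\omega_0\notin[-m,m]\cup Z_\rho$, then $\hat\rho$ does not vanish on the mass shell $\{\xi^2=\omega_0^2-m^2\}$, so a spectral component of $b$ at $\omega_0$ would contribute to $\hat\beta$, through the imaginary part of $R_\pm$, a spherical-wave ($\delta$-on-the-mass-shell) term that is not square-integrable in $x$ --- contradicting $\norm{\beta(t)}_{\E}\le C$; hence $\Lambda\subset[-m,m]\cup Z_\rho$, which is compact since $Z_\rho$ is finite. Now strict nonlinearity gives $h=F(b)=-\sum_{l=1}^{p}2l\,u_l\,\abs{b}^{2l-2}b$ with $u_p>0$ and $p\ge 2$, so in Fourier variables $\hat h$ is a finite sum whose $l$-th term is the convolution of $l$ copies of $\hat b$ with $l-1$ copies of $\overline{\hat b(-\,\cdot\,)}$, supported in the Minkowski sum $l\Lambda-(l-1)\Lambda$. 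Since $\Lambda$ is compact, the Titchmarsh convolution theorem applies in the quasimeasure form used in~\cite{ubk-arma}: the extreme frequency $\sup\Lambda+(p-1)(\sup\Lambda-\inf\Lambda)$ lies in $\supp\hat h$ without cancellation (only the $l=p$ term reaches it, with nonzero coefficient $-2p\,u_p$), and likewise $\inf\Lambda-(p-1)(\sup\Lambda-\inf\Lambda)$. If $\inf\Lambda<\sup\Lambda$ these points lie strictly outside $[\inf\Lambda,\sup\Lambda]\supseteq\supp\hat b$; an interior one would already contradict $\supp\hat h\cap(-m,m)=\supp\hat b$, while one in $\{\abs{\omega}>m\}\setminus Z_\rho$ would contradict $\Lambda\subseteq[-m,m]\cup Z_\rho$ (there $\sigma_\pm\ne 0$, so such a point would itself lie in $\Lambda$). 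The genuinely delicate case, where such extreme frequencies land on the finite exceptional set $\{\pm m\}\cup Z_\rho$, is ruled out by iterating this bookkeeping, using finiteness of $Z_\rho$ together with the non-vanishing of $\sigma$ on $Z_\rho$; altogether $\inf\Lambda=\sup\Lambda$, i.e. $\Lambda=\{\omega_+\}$. The low regularity of $\hat b,\hat h$ (only quasimeasures) and this endpoint analysis are exactly where $p\ge 2$ and finiteness of $Z_\rho$ are indispensable.

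Once $\Lambda=\{\omega_+\}$, a bounded function with one-point spectrum must be a pure harmonic, so $b(t)=Ce^{-i\omega_+ t}$, hence $\beta(x,t)=\phi(x)e^{-i\omega_+ t}$ with $\phi=F(C)\,R_\pm(\omega_+)\rho$ and the compatibility relation $C=\sigma_\pm(\omega_+)F(C)$. Since $F(C)/C$ is real, this forces either $\abs{\omega_+}<m$, in which case $\phi\in H^1(\R^n)$ automatically, or $\omega_+\in Z_\rho$, in which case $\hat\rho$ vanishes on the corresponding mass shell so that $R_\pm(\omega_+)\rho\in H^1(\R^n)$, the hypothesis $\sigma(\omega_+)\ne 0$ being precisely what renders $C=\sigma(\omega_+)F(C)$ solvable. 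In either case $\beta$ is a genuine solitary wave and $(\beta(\cdot,0),\dot\beta(\cdot,0))\in\mathcal{S}$, so $\dist_{\E^{-\epsilon}_F}\big((\psi(t_j),\dot\psi(t_j)),\mathcal{S}\big)\to 0$ along the chosen subsequence; as the sequence $t_j$ and the subsequence were arbitrary, this yields~\eqref{cal-A-mf} for $t\to+\infty$, and the case $t\to-\infty$ is identical.
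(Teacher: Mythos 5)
Your proposal follows essentially the same route as the paper's argument (sketched in Section~\ref{sect-results} and carried out in detail for the one-oscillator model in Section~\ref{sect-proof}, with the mean-field case deferred to \cite{ukr-mpi}): omega-limit trajectories, splitting off the dispersive free component via local energy decay, the limiting-absorption spectral representation, compactness of the time-spectrum up to the exceptional set $Z_\rho$, and the Titchmarsh convolution theorem to collapse the spectrum to a single frequency. The adaptations you make for the mean-field coupling --- replacing the point values $\psi(X_J,t)$ by the scalar amplitude $\langle\rho,\psi(\cdot,t)\rangle$ and invoking the finiteness of $Z_\rho$ and the non-vanishing of $\sigma(\omega)$ on $Z_\rho$ exactly where the endpoint bookkeeping requires them --- are the intended ones.
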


Above,
$\dist\sb{\E\sp{-\epsilon}\sb{F}}(\Psi,\mathcal{S})
:=\inf\limits\sb{\Phi\in\mathcal{S}}\norm{\Psi-\Phi}\sb{\E\sp{-\epsilon}\sb{F}}$,
with $\norm{\cdot}\sb{\E\sp{-\epsilon}\sb{F}}$
is def\/ined in  (\ref{def-e-metric}).

Theorem~\ref{main-theorem-no}
is proved in \cite{ukk-mpi};
Theorem~\ref{main-theorem-mf}
is proved in \cite{ukr-mpi}.
We present the sketch of the proof
of Theorem~\ref{main-theorem-no}
for one oscillator in Section~\ref{sect-proof}.

Let us mention several important points.

\begin{enumerate}\itemsep=0pt
\item
%%AC changed:
In the linear case,
the global attractor
contains the linear span of points of the solitary manifold,
$\langle\mathcal{S}\rangle$.
In \cite{ubk-arma},
we prove that for the model of one linear oscillator
attached to the Klein--Gordon f\/ield the global attractor
indeed coincides with $\langle\mathcal{S}\rangle$.
%%AC removed:
%In Proposition~\ref{prop-nonlin} below,
%we give an example of such solutions
%in the situation
%when the condition (\ref{delta-small}) is violated.
\item
The condition (\ref{delta-small})
allows to avoid ``trapped modes'',
which could also be characterized as multifrequency
solitary waves.
In Proposition~\ref{prop-delta-small} below,
we give an example of such solutions
in the situation
when the condition (\ref{delta-small}) is violated.

Similarly, the condition
of Theorem~\ref{main-theorem-mf}
that $\sigma(\omega)$
does not vanish for $\omega\in Z\sb\rho$
allows to avoid multifrequency solitary waves.
\item
We prove the attraction
of any f\/inite energy solution
to the solitary manifold $\mathcal{S}$:
\begin{gather}\label{attraction}
(\psi(t),\dot\psi(t))\longrightarrow\mathcal{S},
\qquad t\to\pm\infty,
\end{gather}
where the convergence holds in local seminorms.
In this sense,
$\mathcal{S}$ is a \emph{weak}
(convergence is local in space)
\emph{global}
(convergence holds for arbitrary initial data)
attractor.
\item
$\mathcal{S}$ can be at most a \emph{weak} attractor
because we need to
keep forgetting about the outgoing dispersive waves,
so that the dispersion plays the role of friction.
A \emph{strong} attractor would have
to consist of the direct sum of $\mathcal{S}$
and the space of outgoing waves.
\item
We interpret the local energy decay caused by dispersion
as a certain friction ef\/fect
in order to clarify the cause
of the convergence to the attractor in a Hamiltonian model.
This ``friction'' does not contradict the time reversibility:
if the system develops backwards in time,
one observes the same local energy decay
which leads to the convergence to the attractor
as $t\to -\infty$.
\item
Although we proved the attraction (\ref{attraction}) to $\mathcal{S}$,
we have not proved the attraction to a~particular solitary wave,
falling short of proving (\ref{asymptotics}).
Hypothetically,
if $\mathcal{S}/\mathbf{U}(1)$ contains
continuous components,
a solution can be drifting along $\mathcal{S}$,
keeping asymptotically close to it,
but never approaching a particular solitary wave.
This could be viewed as the adiabatic modulation
of solitary wave parameters.
Apparently, if $\mathcal{S}/\mathbf{U}(1)$ is discrete,
a~solution converges to a particular solitary wave.
\item
The requirement that the nonlinearity is polynomial
allows us to apply the Titchmarsh convolution theorem.
This step is vital in our approach.
We do not know whether the polynomiality requirement could be dropped.
\item
For the real initial data,
we obtain a real-valued solution $\psi(t)$.
Therefore, the conver\-gence~(\ref{cal-A-no}), (\ref{cal-A-mf})
of $\Psi(t)=(\psi(t),\dot\psi(t))$
to the set of pairs
$(\phi\sb\omega,-i\omega\phi\sb\omega)$
with $\omega\in\R\backslash \{0\}$
implies that $\psi(t)$
locally converges to zero.
\end{enumerate}

\subsection*{Sketch of the proof}

First, we introduce a concept of the omega-limit trajectory
$\beta(x,t)$ which
plays a crucial role in the proof.

\begin{definition}[Omega-limit trajectory]
\label{def-omega-limit}
The function $\beta(x,t)$
is an omega-limit trajectory
if there is a global solution
$\psi\in C(\R,\E)$
and a sequence of times $\{s\sb j\sothat j\in\N\}$
with
$\lim\limits\sb{j\to\infty}s\sb j=\infty$
so that
\[
\psi(x,t+s\sb j)\rightarrow\beta(x,t),
\]
where the convergence is in
$C\sb{b}([-T,T]\times\mathbb{B}^n\sb R)$
for any $T>0$ and $R>0$.
\end{definition}

We are going to prove that all omega-limit trajectories are solitary waves:
$\beta(x,t){=}\phi\sb\omega(x)e^{-i\omega t}$.
It suf\/f\/ices to prove that the time spectrum of
any omega-limit trajectory $\beta$
consists of at most one frequency.

To complete this program,
we study the time spectrum of solutions,
that is, their complex Fourier--Laplace transform in time.
First, we prove that
the spectral density of a solution
is absolutely continuous for $\abs{\omega}>m$
hence the corresponding component of the solution disperses completely.
It follows that the time-spectrum
of omega-limit trajectory $\beta$
is contained in a f\/inite interval $[-m,m]$.

Second, we notice that $\beta$ also satisf\/ies the original nonlinear equation.
Since the spectral support of $\beta$
is compact and the nonlinearity is polynomial,
we may apply the Titchmarsh convolution theorem.
This theorem allows to conclude that
the spectral support of the nonlinearity
would be strictly larger than the spectral support of the linear
terms in the equation (which would be a contradiction!)
except in the case when
the spectrum of the omega-limit trajectory
consists of a single frequency
$\omega\sb{+}\in[-m,m]$.

Since any omega-limit trajectory is a solitary
wave,
the attraction (\ref{attraction}) follows.

\subsection*{Open problems}

\begin{enumerate}\itemsep=0pt
\item
As we mentioned,
we prove the attraction to $\mathcal{S}$,
as stated in (\ref{attraction}),
but have not proved the attraction to a particular solitary wave
like (\ref{asymptotics}).
It would be interesting to f\/ind solutions with multiple
omega-limit points, that is, the situation when
the frequency parameter $\omega$ keeps changing adiabatically.
\item
Our argument does not apply to the Schr\"odinger equation.
The important feature of the Klein--Gordon equation is that the
continuous spectrum corresponds to $\abs{\omega}\ge m$,
hence the spectral density of the solution is absolutely continuous
for $\abs{\omega}\ge m$,
while the spectrum of the omega-limit trajectory
is within the compact set $[-m,m]$.
This is not so for the Schr\"odinger equation:
since the continuous spectrum corresponds to $\omega\ge 0$,
the resulting restriction on the spectrum of the omega-limit trajectory
is $\omega\le 0$.
As a result, we do not know whether the spectrum is compact;
the Titchmarsh convolution theorem does not
apply, and the proof breaks down.
It would be extremely interesting to investigate
whether the convergence to solitary waves is no longer true,
or instead certain modif\/ication of the Titchmarsh theorem
allows to reduce the spectrum to a point.
\item
Similarly,
the Titchmarsh theorem does not apply when the nonlinearity
is not polynomial, and it would be interesting to
investigate what could happen in such a case.
%\item
%It would be interesting to characterize the global attractor
%in the sense of Chephyzhov-Vishik \cite{MR1868930},
%which is the attracting set for all trajectories
%starting from a compact set.
\end{enumerate}

\section[Proof
of attraction to solitary waves
for the Klein-Gordon field
with one nonlinear oscillator]{Proof
of attraction to solitary waves\\
for the Klein--Gordon f\/ield
with one nonlinear oscillator}
\label{sect-proof}

We will sketch the proof
of Theorem~\ref{main-theorem-no}
for the system (\ref{kg-1o})
which describes
one nonlinear oscillator
located at the origin.

%\subsection{Separation of dispersive component}
%\label{sect-splitting}

\begin{proposition}[Compactness. Existence of omega-limit trajectories]
\label{coco} \
\begin{enumerate}\itemsep=0pt
\item
For any sequence $s\sb{j}\to+\infty$
there exists a subsequence $s\sb{j'}\to+\infty$
such that
\begin{gather}
\psi(x,s\sb{j'}+t)
\to
\beta(x,t),
\qquad
x\in\R,
\quad
t\in\R,
\label{ol}
\end{gather}
for some $\beta\in C(\R\times\R)$,
where the convergence
is in
$C\sb{b}([-T,T]\times[-R,R])$,
for any $T>0$ and $R>0$.
\item
\begin{gather}\label{beta-beta}
\sup\limits\sb{t\in\R}\norm{\beta(\cdot,t)}\sb{H\sp{1}}<\infty.
\end{gather}
\end{enumerate}
\end{proposition}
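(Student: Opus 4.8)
The plan is to extract the omega‑limit trajectory as a diagonal subsequential limit, using the a priori bound from Theorem~\ref{theorem-well-posedness} together with a compactness argument based on the regularizing effect of the free Klein--Gordon propagator away from the singular support of the source.

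\textbf{Step 1: uniform bounds and weak compactness.} By the a priori bound \eqref{a-priori}, the translated solutions $\psi\sb j(x,t):=\psi(x,s\sb j+t)$ satisfy $\sup\sb j\sup\sb{t\in\R}\norm{(\psi\sb j(t),\dot\psi\sb j(t))}\sb{\E}\le C(\psi\sb 0,\pi\sb 0)$, where $C$ does not depend on $j$ because the bound in \eqref{a-priori} is global in time. In particular, for each fixed $t$, the family $\{(\psi\sb j(\cdot,t),\dot\psi\sb j(\cdot,t))\}$ is bounded in $\E=H\sp 1(\R)\oplus L\sp 2(\R)$. Passing to a subsequence (first in $t$ rational, then diagonalizing), one gets weak convergence $(\psi\sb j(\cdot,t),\dot\psi\sb j(\cdot,t))\rightharpoonup(\beta(\cdot,t),\dot\beta(\cdot,t))$ in $\E$ for each $t$, and the weak lower semicontinuity of the norm yields \eqref{beta-beta} immediately: $\sup\sb t\norm{\beta(\cdot,t)}\sb{H\sp 1}\le C(\psi\sb 0,\pi\sb 0)$.

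\textbf{Step 2: upgrading to local uniform convergence.} The point is to promote weak convergence to convergence in $C\sb b([-T,T]\times[-R,R])$. Here I would use the equation: away from the source, $\psi\sb j$ solves the free Klein--Gordon equation $\ddot\psi\sb j=\psi\sb j''-m\sp 2\psi\sb j$ on $\{x\ne 0\}$, and the source $\delta(x)F(\psi\sb j(0,t))$ is, by the Sobolev embedding $H\sp 1(\R)\hookrightarrow C\sb b(\R)$ together with the uniform $\E$-bound, a bounded (in $t$) multiple of $\delta(x)$ with a uniformly bounded, equicontinuous coefficient (equicontinuity of $t\mapsto\psi\sb j(0,t)$ follows from the H\"older continuity statement (v) of Theorem~\ref{theorem-well-posedness}, again with a uniform constant). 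Representing $\psi\sb j$ via Duhamel against the one‑dimensional Klein--Gordon kernel, one sees that on any space‑time box the $\psi\sb j$ are uniformly bounded in a slightly better norm (e.g. $H\sp{1+s}\sb{\rm loc}$ in $x$, H\"older in $t$, away from the light cone emanating from the origin; and even across it the jump in $\psi\sb j'$ is controlled). Rellich's theorem then gives a subsequence converging in $C\sb b([-T,T]\times[-R,R])$ for each pair $(T,R)\in\N\sp 2$; a final diagonal extraction over $(T,R)$ produces the subsequence $s\sb{j'}$ and the limit $\beta\in C(\R\times\R)$ asserted in \eqref{ol}. Passing to the limit in the Duhamel formula also shows $\beta$ itself solves \eqref{kg-1o} (this is recorded for later use in the sketch, though not part of the present statement).

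\textbf{Main obstacle.} The delicate part is Step~2: the source term is a distribution supported on $\{x=0\}$, so one does not get uniform interior regularity across the line $x=0$, and the naive bound only gives $\psi\sb j\in C\sb b$ with no compactness. The resolution is that the \emph{only} obstruction to regularity is the prescribed jump $\psi\sb j'(0+,t)-\psi\sb j'(0-,t)=-F(\psi\sb j(0,t))$, whose right‑hand side is uniformly bounded and equicontinuous in $t$; subtracting off an explicit profile of the form $\tfrac12\int F(\psi\sb j(0,\tau))\,(\text{kernel})$ that carries this jump leaves a remainder that solves the free equation with smooth data and is therefore uniformly locally smooth, hence precompact by Rellich. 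One must also check that the subtracted profile itself converges locally uniformly, which follows from the equicontinuity of $t\mapsto\psi\sb j(0,t)$ and Arzel\`a--Ascoli applied to the scalar functions $t\mapsto\psi\sb j(0,t)$. Once this decomposition is in place, the two claims \eqref{ol} and \eqref{beta-beta} follow as above.
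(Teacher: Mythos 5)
Your overall skeleton (uniform bounds from \eqref{a-priori}, equicontinuity, Ascoli--Arzel\`a/diagonal extraction, and lower semicontinuity for \eqref{beta-beta}) is the right one, and Step~1 is fine. But Step~2 both misidentifies the obstacle and contains a false assertion. The claim that after subtracting a jump profile the remainder ``solves the free equation with smooth data and is therefore uniformly locally smooth'' is wrong on both counts: the translated data $(\psi(\cdot,s\sb j),\dot\psi(\cdot,s\sb j))$ are only bounded in $\E=H\sp1\oplus L\sp2$, not smooth, and the Klein--Gordon flow is a hyperbolic evolution that preserves but does not improve Sobolev regularity, so no uniform local smoothness is available anywhere, on or off the light cone. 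Moreover the delta source is not actually an obstruction to the compactness you need: it creates a jump in $\psi'$, but $\psi(\cdot,t)$ itself stays in $H\sp1(\R)\hookrightarrow C\sp{0,1/2}(\R)$ across $x=0$, so the uniform $\E$-bound already gives uniform H\"older-$\tfrac12$ equicontinuity in $x$ with no decomposition at all. The entire Duhamel/Rellich/jump-subtraction apparatus is machinery for a problem that is not there.

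The paper's proof is two lines and uses an ingredient you already cite but do not exploit fully: Theorem~\ref{theorem-well-posedness}~({\it v}) gives $(\psi,\dot\psi)\in C\sp{(\epsilon)}(\R,\E\sp{-\epsilon})$ with constants uniform in time, hence $\psi\in C\sp{(1/4)}(\R,H\sp{3/4}(\R))$ for $\epsilon=1/4$; combined with the Sobolev embedding $H\sp{3/4}(\R)\hookrightarrow C\sp{0,\alpha}(\R)$, $\alpha<1/4$, this yields \emph{joint} H\"older equicontinuity of the family $\psi(x,s\sb j+t)$ in $(x,t)$ on all of $\R\sp2$, and \eqref{ol} follows directly from Ascoli--Arzel\`a with a diagonal extraction over the boxes $[-T,T]\times[-R,R]$. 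The bound \eqref{beta-beta} then follows from \eqref{ol}, \eqref{a-priori}, and Fatou's lemma (equivalently, the weak lower semicontinuity you invoke). You used statement ({\it v}) only for the scalar trace $t\mapsto\psi\sb j(0,t)$; using it for the full $H\sp{1-\epsilon}$-valued trajectory makes Step~2 collapse to one application of Sobolev embedding and renders the Duhamel argument unnecessary.
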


\begin{proof}
By Theorem~\ref{theorem-well-posedness}~({\it v}),
for any $\epsilon\in[0,1]$,
\begin{gather*}
\psi\in C\sp{(\epsilon)}(\R,H\sp{1-\epsilon}(\R)).
\end{gather*}
Taking $\epsilon=1/4$,
we see that
$\psi\in C\sp{(\alpha)}(\R\times\R)$,
for any $\alpha<1/4$.
Now the f\/irst statement of the proposition
follows by the Ascoli--Arzel\`a theorem.
The bound (\ref{beta-beta})
follows from
(\ref{ol}),
the bound (\ref{a-priori}),
and the Fatou lemma.
\end{proof}

We call {\it omega-limit trajectory}
any function $\beta(x,t)$
that can appear as a limit in (\ref{ol})
(cf.\ Def\/inition~\ref{def-omega-limit}).
We are going to prove that
every omega-limit trajectory $\beta$
belongs to the set of solitary waves;
that is,
\begin{gather}\label{eidd}
\beta(x,t)
=
\phi\sb{\omega\sb{+}}(x)e\sp{-i\omega\sb{+}t}
\qquad
{\rm for\ some} \quad \omega\sb{+}\in[-m,m].
\end{gather}

\begin{remark}\label{remark-end}
The fact that any omega-limit trajectory
turns out to be a solitary wave
implies the following statement:
\begin{gather}\label{cal-A-no-weaker}
{\rm if\ there\ is\ a\ sequence\  }
t\sb j\to\infty
{\rm \ \ so\ that\  }
(\psi(t\sb j),\dot\psi(t\sb j))\stackrel{\E\sb{F}}\longrightarrow
\Phi\in H\sp 1\times L\sp 2,
{\rm \ \ then\   }
\Phi\in\mathcal{S}.\!\!
\end{gather}
In turn,
(\ref{cal-A-no-weaker})
implies the convergence to the attractor
in the metric
(\ref{def-e-metric})
of $\E\sb{F}\sp{-\epsilon}$
for $\epsilon>0$:
\begin{gather*}%\label{cal-A-1o}
(\psi(t),\dot\psi(t))
\stackrel{\E\sb{F}\sp{-\epsilon}}\longrightarrow\mathcal{S},
\qquad
\epsilon>0,
\quad
t\to\pm\infty.
\end{gather*}
This is weaker than the convergence to the attractor
in the topology of $\E\sb{F}$
stated in Theorem~\ref{main-theorem-no}.
For the proof of the convergence to the attractor in the topology of $\E\sb{F}$,
see \cite{ubk-arma}.
\end{remark}

Let us split the solution
$\psi(x,t)$
into two components,
$\psi(x,t)=\chi(x,t)+\varphi(x,t)$,
which are def\/ined for all $t\in\R$
as solutions to the following Cauchy problems:
\begin{gather}
 \ddot\chi(x,t)=\chi''(x,t)-m^2\chi(x,t),
\qquad
(\chi,\dot\chi)\at{t=0}=(\psi\sb{0}(x),\pi\sb{0}(x)),
\nonumber%\label{KG-cp-1}
\\
\ddot\varphi(x,t)=\varphi''(x,t)-m^2\varphi(x,t)
+
\delta(x)f(t),
\qquad
(\varphi,\dot\varphi)\at{t=0}=(0,0),
\label{KG-cp-2-0}
\end{gather}
where
$(\psi\sb{0}(x),\pi\sb{0}(x))$ is the initial data from (\ref{kg-1o}),
and
\begin{gather}\label{def-f}
f(t):=F(\psi(0,t)),
\qquad
t\in\R.
\end{gather}

\begin{lemma}[Local energy decay of the dispersive component]
\label{lemma-decay-psi1}
There is the following decay for $\chi$:
%%in the space ${\E}$:
\begin{gather*}%\label{dp0}
\lim\sb{t\to\infty}
\Norm{(\chi(\cdot,t),\dot\chi(\cdot,t))}\sb{{\E},R}=0,
\qquad\forall\, R>0.
\end{gather*}
\end{lemma}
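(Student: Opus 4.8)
The plan is to treat $\chi$ by explicit Fourier analysis, exploiting the fact that the free Klein--Gordon flow on $\E$ is a unitary group. The free equation conserves $\mathcal{H}_{\rm free}(\psi,\pi)=\frac12\Norm{(\psi,\pi)}_{\E}^2$, so $W_0(t):(\psi_0,\pi_0)\mapsto(\chi(\cdot,t),\dot\chi(\cdot,t))$ is unitary on $\E$; hence
\[
\Norm{(\chi(\cdot,t),\dot\chi(\cdot,t))}_{\E,R}\le\Norm{(\chi(\cdot,t),\dot\chi(\cdot,t))}_{\E}=\Norm{(\psi_0,\pi_0)}_{\E}
\]
for every $R>0$ and $t\in\R$, and by linearity the functional $(\psi_0,\pi_0)\mapsto\Norm{(\chi(\cdot,t),\dot\chi(\cdot,t))}_{\E,R}$ is $1$-Lipschitz in $\Norm{\cdot}_{\E}$, uniformly in $t$. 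Consequently it suffices to establish the decay for $(\psi_0,\pi_0)$ ranging over a dense subset of $\E$, the general case following by a standard $\varepsilon/2$ argument using the uniform Lipschitz bound.

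For the dense subset I would take initial data with $\hat\psi_0,\hat\pi_0\in C^\infty_c(\R\setminus\{0\})$ (dense in $H^1(\R)\oplus L^2(\R)$, since deleting the single point $\xi=0$ does not affect density in the relevant weighted $L^2$ spaces; such data are Schwartz). With $\omega(\xi)=\sqrt{\xi^2+m^2}$ one has $\hat\chi(\xi,t)=\cos(\omega(\xi)t)\hat\psi_0(\xi)+\omega(\xi)^{-1}\sin(\omega(\xi)t)\hat\pi_0(\xi)$, and $\dot\chi$, $\chi'$ are obtained by multiplying this symbol by $\pm i\omega(\xi)$, resp.\ $i\xi$. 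Expanding the trigonometric functions into exponentials, each of $\chi(x,t),\dot\chi(x,t),\chi'(x,t)$ becomes a finite sum of oscillatory integrals $\int_{\R} e^{i(x\xi\pm\omega(\xi)t)}a_\pm(\xi)\,d\xi$ with $a_\pm\in C^\infty_c(\R\setminus\{0\})$.

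The key estimate is non-stationary phase. On $\supp a_\pm$ the group velocity $\omega'(\xi)=\xi/\omega(\xi)$ is bounded below in absolute value by some $c>0$ --- this is exactly what excluding $\xi=0$, the unique critical point of $\omega$, buys us. Fix $R>0$. For $x\in[-R,R]$ and $t\ge 2R/c$ the phase $\varphi=x\xi\pm\omega(\xi)t$ satisfies $\Abs{\partial_\xi\varphi}=\Abs{x\pm\omega'(\xi)t}\ge ct/2$ on $\supp a_\pm$; integrating by parts $N$ times with $e^{i\varphi}=-i(\partial_\xi\varphi)^{-1}\partial_\xi e^{i\varphi}$, and noting that the higher $\xi$-derivatives of $\varphi$ are independent of $x$, one obtains a bound $C_N t^{-N}$ uniform for $x\in[-R,R]$ (with $C_N$ depending only on $N$, $c$, and finitely many derivatives of $a_\pm$ and $\omega$ on $\supp a_\pm$). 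Thus $\chi(\cdot,t),\dot\chi(\cdot,t),\chi'(\cdot,t)\to0$ uniformly on $[-R,R]$, hence
\[
\Norm{(\chi(\cdot,t),\dot\chi(\cdot,t))}_{\E,R}^2=\Norm{\dot\chi(\cdot,t)}_{L^2_R}^2+\Norm{\chi'(\cdot,t)}_{L^2_R}^2+m^2\Norm{\chi(\cdot,t)}_{L^2_R}^2\longrightarrow0,
\]
and the Lipschitz reduction of the first step completes the argument.

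The one place requiring genuine care is the uniformity of the oscillatory-integral bound over $x\in\mathbb{B}_R$ together with the presence of the stationary point $\xi=0$ of $\omega$; both are neutralized by the density reduction, since for data with Fourier support away from $\xi=0$ the local decay is as fast as one wishes. Alternatively, one may retain $\xi=0$ and invoke the classical $t^{-1/2}$ stationary-phase bound $\Norm{\chi(\cdot,t)}_{L^\infty_R}=O(t^{-1/2})$ for Schwartz data, which is still more than enough to conclude $L^2(\mathbb{B}_R)$-decay before passing to the general case via density and energy conservation. Either way this is just the standard local energy decay for the free Klein--Gordon equation in one dimension.
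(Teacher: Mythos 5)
Your argument is correct. Note, however, that the paper does not actually prove this lemma: it simply cites \cite[Lemma~3.1]{ubk-arma}, so the comparison is with the reference rather than with anything in the text above. Your proof is the standard self-contained one, and each step checks out: the free Klein--Gordon flow conserves $\frac12\Norm{\cdot}_{\E}^2$, so the local seminorm at time $t$ is a $1$-Lipschitz functional of the data uniformly in $t$, which legitimately reduces the claim to a dense class; data with $\hat\psi_0,\hat\pi_0\in C^\infty_c(\R\setminus\{0\})$ are dense in $H^1\oplus L^2$, and compactness of the support inside the open set $\R\setminus\{0\}$ gives the needed uniform lower bound $\abs{\omega'(\xi)}\ge c>0$; the non-stationary phase estimate then yields $O(t^{-N})$ uniformly for $\abs{x}\le R$ because $\abs{x\pm\omega'(\xi)t}\ge ct/2$ for $t\ge 2R/c$ and each integration by parts gains a full factor of $t^{-1}$ (the second derivative of the phase is only $O(t)$ against the squared denominator $O(t^2)$). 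Your fallback route via the $t^{-1/2}$ stationary-phase bound also works, since for $x$ in a fixed ball the stationary point tends to $\xi=0$ where $\omega''(0)=1/m\neq 0$, so the bound is uniform on $\mathbb{B}_R$ and more than suffices for $L^2(\mathbb{B}_R)$ decay. This is exactly the kind of argument the cited reference carries out (local energy decay for the free 1D Klein--Gordon equation), so there is nothing to object to; the only stylistic remark is that the first, cleaner route already covers everything and the alternative paragraph is redundant.
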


For the proof, see \cite[Lemma 3.1]{ubk-arma}.
Lemma~\ref{lemma-decay-psi1}
means that the dispersive component
$\chi$ does not give any contribution
to the omega-limit trajectories
(see Def\/inition~\ref{def-omega-limit}).

\begin{figure}[htbp]
%\vspace{-5mm}
%\input komech-fig-domain.tex
%\vspace{-5mm}
\centerline{\includegraphics{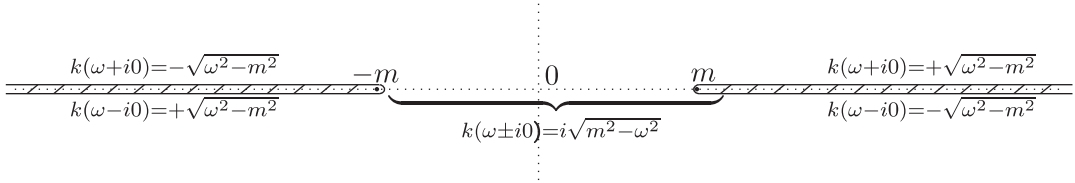}}

\caption{Domain $D$
and the values of
%%$k\sb{\pm}(\omega):=k(\omega\pm i0)$, $\omega\in\R$
$k(\omega\pm i0)$, $\omega\in\R$.}
\label{fig-domain}
\end{figure}

Let $k(\omega)$ be the analytic function
with the domain
$D:=\C\backslash((-\infty,-m]\cup[m,+\infty))$
such that
\begin{gather}\label{def-k}
k(\omega)=\sqrt{\omega\sp 2-m^2},
\qquad\Im k(\omega)>0,
\qquad
\omega
\in D.
\end{gather}
See Fig.~\ref{fig-domain}.
Let us also denote the limit of $k(\omega)$
for $\omega+i0$, $\omega\in\R$, by
\begin{gather}\label{def-k-plus}
k\sb{+}(\omega):=k(\omega+i0),
\qquad
\omega\in\R.
\end{gather}
%As illustrated on Figure~\ref{fig-domain}
%(where all square roots take positive values),
%we have
%\begin{gather}\label{omega-k}
%\omega\,k\sb{+}(\omega)\ge 0
%\qquad
%{\rm for}
%\quad
%\omega\in\R\backslash(-m,m).
%\end{gather}

The function $\varphi(x,t)=\psi(x,t)-\chi(x,t)$
satisf\/ies the following Cauchy problem:
\begin{gather*}
\ddot\varphi(x,t)=\varphi''(x,t)-m^2\varphi(x,t)
+
\delta(x)f(t),
\qquad
(\varphi,\dot\varphi)\at{t=0}=(0,0),
%\label{KG-cp-2}
\end{gather*}
with $f(t)$ def\/ined in (\ref{def-f}).
Note that
$\psi(0,\cdot)\in C\sb{b}(\R)$
by the Sobolev embedding,
since $(\psi,\dot\psi)\in C\sb{b}(\R,\E)$
by Theorem~\ref{theorem-well-posedness}~({\it iv}).
Hence, $f(t)\in C\sb{b}(\R)$.
On the other hand,
since $\chi(x,t)$
is a f\/inite energy solution to the free Klein--Gordon equation,
$(\chi,\dot\chi)\in C\sb{b}(\R,\E)$.
It follows that $\varphi=\psi-\chi$ is also of f\/inite energy norm:
\begin{gather}\label{psi-2-bounds}
(\varphi,\dot\varphi)\in C\sb{b}(\R,\E).
\end{gather}
We denote
\begin{gather*}%\label{def-f-pm}
\varphi\sb{+}(x,t):=\theta(t)\varphi(x,t),
\qquad
f\sb{+}(t):=\theta(t)f(t)
=\theta(t)(\psi(0,t)).
\end{gather*}
%Since
%$(\varphi\sb{+},\dot\varphi\sb{+})\at{t=0}
%=(\varphi,\dot\varphi)\at{t=0}=(0,0)$,
The function $\varphi\sb{+}(x,t)$ satisf\/ies
the equation
\begin{gather}\label{KG-cp-2-pm}
\ddot\varphi\sb{+}(x,t)=\p\sb x^2\varphi\sb{+}(x,t)-m^2\varphi\sb{+}(x,t)
+
\delta(x)f\sb{+}(t),
\qquad
(\varphi\sb{+},\dot\varphi\sb{+})\at{t=0}
=(0,0),
\quad
t\in\R.
\end{gather}
We set $\mathcal{F}\sb{t\to\omega}[g(t)](\omega)
=\displaystyle\int\sb{\R} e^{i\omega t}g(t)\,dt$
for a function $g(t)$ from the Schwartz space $\mathscr{S}(\R)$.
The Fourier transform
\begin{gather*}%\label{FL0}
\hat\varphi\sb{+}(x,\omega)
=\mathcal{F}\sb{t\to\omega}[\varphi\sb{+}(x,t)]
=
\int\sb{0}\sp\infty
e^{i\omega t}
%\theta(t)
\varphi(x,t)\,dt,
\qquad
(x,\omega)\in\R^2,
\end{gather*}
is a continuous function of $x\in\R$
%\ft{prokommentirovat' vse mesta gde ispolzuetsa eta neporerivnost'!!}
with
values in
tempered distributions of $\omega\in\R$,
which satisf\/ies the following equation (cf.~(\ref{KG-cp-2-pm})):
\begin{gather*}%\label{KG-cp-2-st}
-\omega^2\hat\varphi\sb{+}(x,\omega)
=
\p\sb x^2\hat\varphi\sb{+}(x,\omega)
-m^2\hat\varphi\sb{+}(x,\omega)
+\delta(x)\hat f\sb{+}(\omega),
\qquad
(x,\omega)\in\R^2.
\end{gather*}

%We are going to construct a representation
%for the solution $\hat\varphi\sb{+}(x,\omega)$
%in a form suitable for our purposes.

\begin{proposition}[Spectral representation]
%\label{lemma-41}
There is the following relation:
\begin{gather}\label{c-s}
\hat\varphi\sb{+}(x,\omega)
=
\hat\varphi\sb{+}(0,\omega)
e^{ik\sb{+}(\omega)\abs{x}},
\qquad
x\in\R.
\end{gather}
\end{proposition}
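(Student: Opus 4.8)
The plan is to obtain the representation first for complex frequencies $\omega$ in the open upper half-plane, where everything is an absolutely convergent integral, and then pass to the boundary $\omega\in\R$. For $\Im\omega>0$ set
\[
\tilde\varphi\sb{+}(x,\omega):=\int\sb 0\sp\infty e\sp{i\omega t}\varphi(x,t)\,dt ,
\]
which converges by the bound (\ref{psi-2-bounds}); it is analytic in $\omega$ with values in $H\sp 1(\R)$ (indeed $\Norm{\tilde\varphi\sb{+}(\cdot,\omega)}\sb{H\sp 1}\le C/\Im\omega$, with $C=\sup\sb t\Norm{\varphi(\cdot,t)}\sb{H\sp 1}$), and, multiplying (\ref{KG-cp-2-pm}) by $e\sp{i\omega t}$ and integrating by parts in $t$ (the boundary terms vanish by the zero initial data), it satisfies the stationary equation
\[
\partial\sb x\sp 2\tilde\varphi\sb{+}(x,\omega)=\big(m\sp 2-\omega\sp 2\big)\tilde\varphi\sb{+}(x,\omega)-\delta(x)\tilde f\sb{+}(\omega),
\qquad \Im\omega>0 ,
\]
with $\tilde f\sb{+}(\omega)=\int\sb 0\sp\infty e\sp{i\omega t}f(t)\,dt$.

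Next I would solve this ODE in $x$ for each fixed $\omega$ with $\Im\omega>0$. Away from the origin it reduces to $\partial\sb x\sp 2 u=-k(\omega)\sp 2 u$ with $k(\omega)=\sqrt{\omega\sp 2-m\sp 2}$, $\Im k(\omega)>0$ (see (\ref{def-k})); on each half-line the solution is a combination of $e\sp{\pm ik(\omega)x}$, and since $\tilde\varphi\sb{+}(\cdot,\omega)\in H\sp 1(\R)\hookrightarrow L\sp 2(\R)\cap C(\R)$ must decay at $\pm\infty$ while $\Im k(\omega)>0$, only $e\sp{ik(\omega)x}$ is square-integrable on $(0,+\infty)$ and only $e\sp{-ik(\omega)x}$ on $(-\infty,0)$. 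Matching the two branches by continuity at $x=0$ forces both amplitudes to equal $\tilde\varphi\sb{+}(0,\omega)$, so that
\[
\tilde\varphi\sb{+}(x,\omega)=\tilde\varphi\sb{+}(0,\omega)\,e\sp{ik(\omega)\abs{x}},
\qquad x\in\R,\quad \Im\omega>0 .
\]
(The jump of $\partial\sb x\tilde\varphi\sb{+}$ at $x=0$ then encodes $2ik(\omega)\tilde\varphi\sb{+}(0,\omega)=-\tilde f\sb{+}(\omega)$, which we do not need here.)

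It remains to pass to the limit $\omega\to\omega+i0$. Writing $\tilde\varphi\sb{+}(x,\omega+i\epsilon)=\mathcal{F}\sb{t\to\omega}[\theta(t)e\sp{-\epsilon t}\varphi(x,t)](\omega)$ and using that $\theta(t)e\sp{-\epsilon t}\varphi(x,t)\to\varphi\sb{+}(x,t)$ in $\mathscr{S}'(\R\sb t)$ as $\epsilon\to 0\sp{+}$ (again by (\ref{psi-2-bounds})), one gets $\tilde\varphi\sb{+}(x,\omega+i\epsilon)\to\hat\varphi\sb{+}(x,\omega)$ in $\mathscr{S}'(\R\sb\omega)$, and likewise $\tilde\varphi\sb{+}(0,\omega+i\epsilon)\to\hat\varphi\sb{+}(0,\omega)$. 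Since $k\sb{+}(\omega)=k(\omega+i0)$ by definition (see (\ref{def-k-plus})) and the family $\{e\sp{ik(\omega+i\epsilon)\abs{x}}\}\sb{\epsilon>0}$ tends to $e\sp{ik\sb{+}(\omega)\abs{x}}$ as $\epsilon\to 0\sp{+}$, boundedly and locally uniformly in $\omega$, passing to the limit in the displayed identity gives exactly (\ref{c-s}).

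The main obstacle is this boundary passage: one must justify that multiplying the distribution $\hat\varphi\sb{+}(0,\cdot)$ by the function $\omega\mapsto e\sp{ik\sb{+}(\omega)\abs{x}}$ is compatible with taking $\epsilon\to 0\sp{+}$, even though this multiplier is only H\"older --- not $C\sp\infty$ --- at $\omega=\pm m$, where $k\sb{+}$ vanishes like a square root. The point to exploit is that $\{e\sp{ik(\omega+i\epsilon)\abs{x}}\}\sb{\epsilon\ge 0}$ is uniformly bounded (modulus $\le 1$, since $\Im k\ge 0$) and converges in $C\sp\infty$ on compact subsets of $\R\setminus\{-m,m\}$, while the limiting factor stays continuous across $\pm m$; together with $\tilde\varphi\sb{+}(0,\omega+i\epsilon)\to\hat\varphi\sb{+}(0,\omega)$ this suffices to pass to the limit against Schwartz test functions of $\omega$. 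As recorded in Fig.~\ref{fig-domain}, $k\sb{+}(\omega)$ is real for $\abs{\omega}>m$, so there $e\sp{ik\sb{+}(\omega)\abs{x}}$ is purely oscillatory --- which is precisely the mechanism by which the component of the solution with $\abs{\omega}>m$ radiates energy to infinity, leaving only the interval $[-m,m]$ in the spectrum of an omega-limit trajectory.
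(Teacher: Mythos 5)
Your proposal is correct and follows essentially the same route as the paper: Fourier--Laplace transform in $\C\sp{+}$, selection of the decaying branch $e\sp{ik(\omega)\abs{x}}$ by the limiting absorption principle (the paper phrases this via the fundamental solution $G\sb{+}(x,\omega)=e\sp{ik(\omega)\abs{x}}/(2ik(\omega))$ rather than by matching decaying exponentials on each half-line, which is the same thing), and then the boundary passage $\Im\omega\to 0+$ in $\mathscr{S}'$. The one step you flag as the ``main obstacle'' --- multiplying the boundary distribution $\hat\varphi\sb{+}(0,\cdot)$ by the merely H\"older-continuous factor $e\sp{ik\sb{+}(\omega)\abs{x}}$ and commuting this with the limit $\epsilon\to 0+$ --- is exactly where the paper invokes the quasimeasure framework (the exponential is a multiplicator in the space of quasimeasures, \cite[Appendix~B]{ubk-arma}), which is the precise formalization of the bounded-multiplier argument you sketch.
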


\begin{proof}
%{\bf Step 1: Complex Fourier--Laplace transform.}\quad
Let us analyze the complex Fourier transform of
$\varphi\sb{+}(x,t)$:
\begin{gather*}%\label{FL}
\displaystyle
\tilde\varphi\sb{+}(x,\omega)
=\mathcal{F}\sb{t\to\omega}[\varphi\sb{+}(x,t)]
=
\int\sb{0}\sp\infty
e\sp{i\omega t}\varphi(x,t)\,dt,
\qquad
x\in\R,
\quad
\omega\in\C\sp{+},
\end{gather*}
where
$\C\sp{+}:=\{z\in\C:\;\Im z>0\}$.
Due to (\ref{psi-2-bounds}),
$\tilde\varphi\sb{+}(\cdot,\omega)$
are $H\sp{1}$-valued analytic functions of $\omega\in\C\sp{+}$.
Equation (\ref{KG-cp-2-pm})
implies that
$\tilde\varphi\sb{+}$ satisf\/ies
\begin{gather*}%\label{FL-2}
-\omega\sp 2\tilde\varphi\sb{+}(x,\omega)
=
\p\sb x^2\tilde\varphi\sb{+}(x,\omega)-m^2\tilde\varphi\sb{+}(x,\omega)
+\delta(x)\tilde f\sb{+}(\omega),
\qquad\omega\in\C\sp{+}.
\end{gather*}
The fundamental solutions
$\displaystyle
G\sb\pm(x,\omega)=\frac{e\sp{\pm i k(\omega)\abs{x}}}{\pm 2i k(\omega)}$
satisfy
\[
G\sb\pm''(x,\omega)+(\omega^2-m^2)G\sb\pm(x,\omega)
=\delta(x),
\qquad\omega\in\C\sp{+}.
\]
Note that for each $\omega\in\C\sp{+}$ the function $G\sb{+}(\cdot,\omega)$
is in $H\sp{1}(\R)$
by def\/inition (\ref{def-k}),
while $G\sb{-}(\cdot,\omega)$ is not.
The solution $\tilde\varphi\sb{+}(x,\omega)$
can be written as a linear combination of these fundamental solutions.
We use the standard ``limiting absorption principle''
for the selection of the appropriate fundamental solution: Since
$\tilde\varphi\sb{+}(\cdot,\omega)\in H\sp{1}(\R)$
for $\omega\in\C\sp{+}$,
so is $G\sb{+}(\cdot,\omega)$,
while $G\sb{-}(\cdot,\omega)$ is not,
we have:
%only $G\sb{+}$ is acceptable:
%This suggests the following representation:
\begin{gather}\label{tilde-psi-tilde-f}
\tilde\varphi\sb{+}(x,\omega)
=-\tilde f\sb{+}(\omega)G\sb{+}(x,\omega)
=-\tilde f\sb{+}(\omega)
\frac{e\sp{i k(\omega)\abs{x}}}{2i k(\omega)},
\qquad\omega\in\C\sp{+}.
\end{gather}
%The proof is straightforward since (\ref{tilde-psi-tilde-f})
%belongs to $H\sp{1}(\R)$ for $\omega\in\C\sp{+}$
%while the solution to (\ref{FL-2})
%which is an $H\sp{1}$-valued analytic function in $\omega$ is unique.
The relation (\ref{tilde-psi-tilde-f}) yields
\begin{gather}\label{phi-1}
\tilde\varphi\sb{+}(x,\omega)
=-\tilde f\sb{+}(\omega)
\frac{e\sp{i k(\omega)\abs{x}}}{2i k(\omega)}
=e^{i k(\omega)\abs{x}}
\tilde\varphi\sb{+}(0,\omega),
\qquad
x\in\R,
\quad
\omega\in\C\sp{+}.
\end{gather}

%\noindent
%{\bf Step 2: Traces on real line.}\quad
Now we
%need to
extend the relation (\ref{phi-1})
to $\omega\in\R$.
Since
$\varphi\in C\sb{b}(\R,H\sp 1(\R))$
by (\ref{psi-2-bounds}),
we have
\begin{gather}\label{vpp-is-vpp}
\theta(t)\varphi(x,t)
=
\lim\sb{\varepsilon\to 0+}
\theta(t)\varphi(x,t)e\sp{-\varepsilon t},
\end{gather}
where the convergence holds in
the space of $H\sp{1}$-valued
tempered distributions,
$\mathscr{S}'(\R,H\sp{1}(\R))$.
% (\ref{bvp1}) holds
The Fourier transform
$\hat\varphi\sb{+}(x,\omega)
:=\mathcal{F}\sb{t\to\omega}[\varphi\sb{+}(x,t)]
=\mathcal{F}\sb{t\to\omega}[\theta(t)\varphi(x,t)]$
is def\/ined
as a tempered $H\sp{1}$-valued distribution of $\omega\in\R$.
%by (\ref{psi-2-bounds}).
As follows from
(\ref{vpp-is-vpp})
and the continuity of the Fourier transform
$\mathcal{F}\sb{t\to\omega}$ in $\mathscr{S}'(\R)$,
$\hat\varphi\sb{+}(x,\omega)$
is the boundary value
of the analytic function $\tilde\varphi\sb{+}(x,\omega)$,
%...$
%\tilde\varphi\sb{+}(x,\omega+i\varepsilon)
%=\mathcal{F}\sb{t\to\omega}[\theta(t)\varphi(x,t)e\sp{-\varepsilon t}],
%$...
in the following sense:
\begin{gather}\label{bvp1}
\hat\varphi\sb{+}(x,\omega)
=\lim\limits\sb{\varepsilon\to 0+}
\tilde\varphi\sb{+}(x,\omega+i\varepsilon)
=\lim\limits\sb{\varepsilon\to 0+}
\mathcal{F}\sb{t\to\omega}[\theta(t)\varphi(x,t)e\sp{-\varepsilon t}],
\qquad\omega\in\R.
\end{gather}
Again, the convergence
is in the space
$\mathscr{S}'(\R,H\sp{1}(\R))$.

We use (\ref{bvp1})
to take the limit
$\Im\omega\to 0+$
in the expression (\ref{phi-1})
for $\tilde\varphi\sb{+}(x,\omega)$,
and keep in mind that
$\tilde\varphi\sb{+}(x,\omega)$
is a quasimeasure
(see Remark~\ref{remark-quasimeasure})
for each $x\in\R$,
while
the exponential factor in (\ref{phi-1})
is a multiplicator
in the space of quasimeasures.
The formula (\ref{c-s}) follows.
%Similarly we justify (\ref{c-s}) for $x\ge 0$.
\end{proof}

\begin{remark}\label{remark-quasimeasure}
A tempered distribution $\mu(\omega)\in\mathscr{S}'(\R)$
is called a {\it quasimeasure} if
$\check\mu(t)
{=}\mathscr{F}\sp{-1}\sb{\omega\to t}[\mu(\omega)]$ $\in C\sb{b}(\R)$.
For more details
on quasimeasures and multiplicators
in the space of quasimeasures,
see \cite[Appendix B]{ubk-arma}.
\end{remark}

\begin{proposition}[Absolute continuity of the spectrum]
\label{prop-continuity-0}
The distribution
$\hat\varphi\sb{+}(0,\omega)$
is absolutely continuous for $\abs{\omega}>m$,
and moreover
\begin{gather}\label{phi-0-41}
\int\sb{\R\backslash[-m,m]}
\abs{\hat\varphi\sb{+}(0,\omega)}\sp 2
\frac{k\sb{+}(\omega)}{\omega}
\,d\omega<\infty,
\end{gather}
where
$k\sb{+}(\omega)/\omega>0$
for $\omega\in\R\backslash[-m,m]$
(see \eqref{def-k-plus}
and Fig.~{\rm \ref{fig-domain}}).
\end{proposition}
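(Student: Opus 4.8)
The plan is to extract the absolute continuity of $\hat\varphi_+(0,\omega)$ on $\abs\omega>m$, together with the quantitative bound \eqref{phi-0-41}, directly from the spectral representation \eqref{c-s} combined with the energy bound \eqref{psi-2-bounds} and an application of the Paley--Wiener / Plancherel machinery adapted to the half-line transform. First I would note that $\varphi_+(x,t)=\theta(t)\varphi(x,t)$ is, by \eqref{psi-2-bounds}, a bounded $H^1$-valued function supported in $t\ge 0$, so its Laplace transform $\tilde\varphi_+(x,\omega)$ is analytic in $\C^+$ with values in $H^1(\R)$, and \eqref{phi-1} expresses it through $\tilde f_+(\omega)$ and the fundamental solution $G_+$. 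The key object is the energy flux: for the free Klein--Gordon evolution driven by the point source $\delta(x)f_+(t)$, the outgoing wave at $\pm\infty$ carries energy away at a rate controlled by the total input, and this is precisely what the weight $k_+(\omega)/\omega$ encodes.

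The main step I would carry out is an energy--flux (or ``radiation'') identity. Concretely, since $(\varphi,\dot\varphi)\in C_b(\R,\E)$ and $\varphi$ solves the free Klein--Gordon equation for $x\ne 0$, I would track the energy contained in the region $\abs x\le R$ and compute its time derivative, which by integration by parts equals the Poynting-type flux $-2\Re\bigl(\dot{\bar\varphi}\,\varphi'\bigr)$ evaluated at $x=\pm R$. Integrating in $t$ over $[0,\infty)$ and using that the energy stays bounded for all time forces the total outgoing flux through $\abs x=R$, as $R\to\infty$, to be finite. Passing to the Fourier side via \eqref{c-s}, one has $\hat\varphi_+(x,\omega)=\hat\varphi_+(0,\omega)e^{ik_+(\omega)\abs x}$; for $\abs\omega>m$ the exponent $ik_+(\omega)$ is purely imaginary (genuine propagation), so $\abs{\hat\varphi_+(x,\omega)}=\abs{\hat\varphi_+(0,\omega)}$ is $x$-independent, whereas for $\abs\omega<m$ it decays. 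Computing the flux through Plancherel in $t$ and then in $x$ (or, more cleanly, writing the outgoing energy in the frequency representation) produces exactly $\int_{\R\setminus[-m,m]}\abs{\hat\varphi_+(0,\omega)}^2\,\frac{k_+(\omega)}{\omega}\,d\omega$ up to a positive constant, and its finiteness is the content of \eqref{phi-0-41}. Absolute continuity on $\abs\omega>m$ then follows because a distribution whose square is integrable against a weight that is bounded below on compact subsets of $\R\setminus[-m,m]$ must be an $L^2_{\mathrm{loc}}$ function there, hence a measure with no singular part on that set.

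I would organize this as: (i) record that $\tilde\varphi_+(\cdot,\omega)$ is $H^1$-valued analytic in $\C^+$ and satisfies \eqref{phi-1}; (ii) for $\varepsilon>0$ apply Plancherel to the regularized transform $\mathcal F_{t\to\omega}[\theta(t)\varphi(x,t)e^{-\varepsilon t}]$ and integrate the resulting identity in $x$ over a bounded interval, using the bound \eqref{psi-2-bounds} uniformly in $\varepsilon$; (iii) let $\varepsilon\to 0+$, invoking the boundary-value statement \eqref{bvp1} and the quasimeasure property of $\hat\varphi_+(x,\omega)$ from Remark~\ref{remark-quasimeasure}, to obtain the weighted $L^2$ bound on $\hat\varphi_+(0,\omega)$ over $\abs\omega>m$; (iv) conclude absolute continuity. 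The delicate point throughout is the interchange of limits: $\hat\varphi_+(x,\omega)$ is only a tempered distribution (a quasimeasure) in $\omega$, so the energy identity cannot be written naively as an integral of $\abs{\hat\varphi_+}^2$ but must be obtained as the $\varepsilon\to0+$ limit of the genuinely convergent identities for $e^{-\varepsilon t}\varphi$, and one must check that the flux terms at $x=\pm R$ (which involve the trace of $\varphi'$, one derivative worse than $H^1$) are controlled --- this is where the explicit exponential form \eqref{c-s} of $\hat\varphi_+$ in $x$ is indispensable, since it turns the flux into an algebraic expression in $\hat\varphi_+(0,\omega)$ and $k_+(\omega)$. That limiting argument, rather than any single computation, is the real obstacle; the rest is the classical limiting-absorption bookkeeping already set up in the preceding propositions.
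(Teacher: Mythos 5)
Your overall scaffolding (Abel regularization $e^{-\varepsilon t}$, Plancherel in $t$, the representation \eqref{c-s}, weak $L^2$ compactness to pass to the boundary value) matches the paper's, but your step (ii) contains a concrete error that breaks the quantitative core of the argument. If you apply Plancherel in $t$ and then integrate in $x$ only over a \emph{bounded} interval, the spatial factor $\int_{-R}^{R}\abs{e^{ik(\omega+i\varepsilon)\abs{x}}}^2dx$ remains bounded (of order $2R$) as $\varepsilon\to0+$ for $\abs{\omega}>m$, while the time integral $\int_0^\infty e^{-2\varepsilon t}\norm{\varphi_+(\cdot,t)}_{L^2}^2dt$ is only $O(1/\varepsilon)$; nothing cancels, and the resulting bound on $\abs{\tilde\varphi_+(0,\omega+i\varepsilon)}^2$ degenerates as $\varepsilon\to0+$. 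The paper's proof hinges on integrating over \emph{all} of $\R$ in $x$: the Parseval identity \eqref{PW} gives the bound $C/\varepsilon$, and the key lemma \eqref{wei} shows that $\norm{e^{ik(\omega+i\varepsilon)\abs{x}}}_{L^2(\R)}^2$ blows up at the exact rate $\frac{1}{\varepsilon}\,\frac{k_+(\omega)}{\omega}$, so the two powers of $\varepsilon$ cancel and the weight $k_+(\omega)/\omega$ emerges. That cancellation --- the divergence of the full-line $L^2$ norm of the outgoing exponential as $\varepsilon\to0+$ --- is the essential point of the proposition, and it is absent from your write-up.

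Your alternative energy-flux route can be repaired into a legitimate variant, but not as stated. First, the energy balance on $\abs{x}\le R$ contains the work done by the source $\delta(x)f(t)$, which is not controlled by \eqref{psi-2-bounds} (it involves the pointwise trace $\dot\varphi(0,t)$); you should instead run the balance on the exterior region $\abs{x}>R$, where there is no source and the initial data vanish, so the cumulative influx through $x=\pm R$ equals the exterior energy, which is nonnegative and uniformly bounded --- no limit $R\to\infty$ is needed. Second, computing the time-integrated flux via Plancherel and $\partial_x\tilde\varphi_+=ik\tilde\varphi_+$ (from \eqref{phi-1}) produces the weight $\Re\bigl[(\omega-i\varepsilon)k(\omega+i\varepsilon)\bigr]\to\omega\,k_+(\omega)$, not $k_+(\omega)/\omega$; since $\omega k_+(\omega)\ge m^2\,k_+(\omega)/\omega$ on $\abs{\omega}>m$, this still implies \eqref{phi-0-41}, but your claim that the flux yields the stated integral ``up to a positive constant'' is false as written and indicates the computation was not carried out. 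With those two corrections the flux argument works and is genuinely different from (and slightly stronger at high frequencies than) the paper's; as submitted, the proposal has a gap precisely where the weight $k_+(\omega)/\omega$ is supposed to be produced.
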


%See the proof in \cite[Proposition 3.3]{ubk-arma}.

\begin{proof}
We use the Paley--Wiener arguments.
Namely, the Parseval identity and (\ref{psi-2-bounds})
imply that
\begin{gather}\label{PW}
\int\limits\sb\R
\norm{\tilde\varphi\sb{+}(\cdot,\omega+i\varepsilon)}\sb{L\sp 2}\sp 2\,d\omega
=2\pi\int\limits\sb 0\sp\infty e\sp{-2\varepsilon t}
\norm{\varphi\sb{+}(\cdot,t)}\sb{L\sp 2}\sp 2\,dt
\le\frac{\cnst}{\varepsilon},
\qquad \varepsilon>0.
\end{gather}
On the other hand,
we can calculate the term in the left-hand side of (\ref{PW}) exactly.
According to (\ref{phi-1}),
\[
\tilde\varphi\sb{+}(x,\omega+i\varepsilon)
=\tilde\varphi\sb{+}(0,\omega+i\varepsilon)e^{ik(\omega+i\varepsilon)\abs{x}},
\]
hence (\ref{PW}) results in
\begin{gather}\label{PW-1}
\varepsilon
\int\sb\R
\abs{\tilde\varphi\sb{+}(0,\omega+i\varepsilon)}^2
\norm{
e^{ik(\omega+i\varepsilon)\abs{x}}
}\sb{L\sp 2}\sp 2
\,d\omega
\le\cnst,
\qquad \varepsilon>0.
\end{gather}

Here is a crucial observation about the
norm of $e^{ik(\omega+i\varepsilon)\abs{x}}$.

{\samepage

\begin{lemma} \
\begin{enumerate}\itemsep=0pt
\item
For $\omega\in\R\backslash(-m,m)$,
\begin{gather}\label{wei}
\lim\sb{\varepsilon\to 0+}
\varepsilon \norm{e^{ik(\omega+i\varepsilon)\abs{x}}}\sb{L\sp 2}\sp 2
=
%n(\omega):=
\frac{k\sb{+}(\omega)}{\omega}.
\end{gather}
\item
For any $\delta>0$
there exists $\varepsilon\sb\delta>0$ such that
for $\abs{\omega}>m+\delta$
and $\varepsilon\in(0,\varepsilon\sb\delta)$,
\begin{gather}\label{n-half}
\varepsilon \norm{e^{ik(\omega+i\varepsilon)\abs{x}}}\sb{L\sp 2}\sp 2
\ge\frac{k\sb{+}(\omega)}{2\omega}.
\end{gather}
\end{enumerate}
\end{lemma}}

\begin{remark}
The asymptotic behavior
of the $L\sp 2$-norm of
$e^{ik(\omega+i\varepsilon)}$
stated in the lemma
is easy to understand:
for $\omega\in\R\backslash[-m,m]$,
this norm is f\/inite
for $\varepsilon>0$
due to the small positive imaginary part
of $k(\omega+i\varepsilon)$,
but it becomes unboundedly large
when $\varepsilon\to 0+$.
Let us also mention
that the integral (\ref{wei})
is easy to evaluate
in the momentum space.
\end{remark}

Substituting
(\ref{n-half})
into (\ref{PW-1}), we get:
\begin{gather}\label{fin}
\int\sb{\abs{\omega}\ge m+\delta}
%\varOmega\sb\delta}
\abs{\tilde\varphi\sb{+}(0,\omega+i\varepsilon)}\sp 2
\frac{k\sb{+}(\omega)}{\omega}
\,d\omega
\le 2C,
\qquad
0<\varepsilon<\varepsilon\sb\delta,
\end{gather}
with the same $C$ as in (\ref{PW-1}).
%and the region $\varOmega\sb\delta$ defined in
%(\ref{def-omega-delta}).
We conclude that for each $\delta>0$ the set of functions
\[
g\sb{\delta,\varepsilon}(\omega)
=
\tilde\varphi\sb{+}(0,\omega+i\varepsilon)
\Abs{\frac{k\sb{+}(\omega)}{\omega}}^{1/2},
\qquad
\varepsilon\in(0,\varepsilon\sb\delta),
\]
def\/ined for $\omega\in\varOmega\sb\delta$,
is bounded in the Hilbert space
$L\sp 2(\R\backslash[-m-\delta,m+\delta])$,
and, by the Banach Theorem, is weakly compact.
The convergence of the distributions (\ref{bvp1})
implies the following weak convergence in the Hilbert space
$L\sp 2(\R\backslash[-m-\delta,m+\delta])$:
\begin{gather*}%\label{wc}
g\sb{\delta,\varepsilon}
\rightharpoondown g\sb\delta,
\qquad \varepsilon\to 0+,
\end{gather*}
where the limit function
$g\sb\delta(\omega)$ coincides with the distribution
$
\hat\varphi\sb{+}(0,\omega)\Abs{\frac{k\sb{+}(\omega)}{\omega}}^{1/2}
$
restricted onto
$\R\backslash[-m-\delta,m+\delta]$.
It remains to note that,
by (\ref{fin}),
the norms
of all
functions $g\sb\delta$, $\delta>0$,
are bounded in $L\sp 2(\R\backslash[-m-\delta,m+\delta])$
by a constant independent on $\delta$,
hence (\ref{phi-0-41}) follows.
\end{proof}

%\subsection{Compactness}
%\label{sect-bound}

By Lemma~\ref{lemma-decay-psi1},
the dispersive component $\chi(\cdot,t)$
converges
to zero in
${\E}\sb{F}$
as $t\to\infty$.
On the other hand,
by (\ref{ol}),
$\psi(x,t+s\sb{j'})$
converges to $\beta(x,t)$
as $j'\to\infty$,
uniformly on every compact set of the plane $\R^2$.
Hence,
$\varphi(x,t+s\sb{j'})
=
\psi(x,t+s\sb{j'})-\chi(x,t+s\sb{j'})$
also
converges to $\beta(x,t)$,
uniformly in every compact set of the plane $\R^2$:
\begin{gather}
\varphi(x,s\sb{j'}+t)
\to
\beta(x,t),
\qquad
x\in\R,
\quad
t\in\R.
\label{oll}
\end{gather}
Therefore, taking the limit in equation (\ref{KG-cp-2-0}),
we conclude that
the omega-limit trajectory $\beta(x,t)$ also satisf\/ies
the same equation:
\begin{gather*}%\label{KG-beta}
\ddot\beta(x,t)
=\beta''(x,t)-m^2\beta(x,t)
+\delta(x)F(\beta),
\qquad
x\in\R,
\quad
t\in\R.
\end{gather*}
%Since
%both $\psi(t+s\sb{j'})$ and $\varphi(t+s\sb{j'})$
%converge to $\beta$,

Taking the Fourier transform of $\beta$ in time,
we see by (\ref{ol})
that $\hat\beta(x,\omega)$
is a continuous function of $x\in\R$,
with values in tempered distributions of $\omega\in\R$,
and that it satisf\/ies the corresponding stationary equation
\begin{gather}\label{KG-beta-1}
-\omega^2\hat\beta(x,\omega)
=\hat\beta''(x,\omega)-m^2\hat\beta(x,\omega)
+\delta(x)\hat g(\omega),
\qquad
(x,\omega)\in\R^2,
\end{gather}
valid in the sense of tempered distributions of $(x,\omega)\in\R^2$,
where
$\hat g(\omega)$
are the Fourier transforms of the function
\begin{gather*}
g(t):=F(\beta(0,t)).
\end{gather*}
For brevity, we denote
\begin{gather*}%\label{def-upbeta}
\upbeta(t):=\beta(0,t).
\end{gather*}

\begin{lemma}[Boundedness of spectrum]
\label{lemma-m-m}
\[
\supp\hat\upbeta
\subset[-m,m].
\]
\end{lemma}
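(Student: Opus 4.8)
The plan is to prove $\supp\hat\upbeta\subset[-m,m]$ by showing that the spectral density of $\beta$ away from $[-m,m]$ must vanish, exploiting the bound \eqref{phi-0-41} obtained for the original solution $\psi$ and passing to the omega-limit. The key point is that the bound \eqref{phi-0-41} is \emph{time-translation invariant} in a suitable sense, so it survives the limit $s_{j'}\to\infty$ and restricts the frequency content of $\beta$.

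First I would translate the bound \eqref{phi-0-41} along the sequence $s_{j'}$. For each fixed $s$, consider the shifted solution $\psi_s(x,t):=\psi(x,t+s)$ and its decomposition into the free part (which decays in local energy norm by Lemma~\ref{lemma-decay-psi1}, applied with initial data at time $s$) and the retarded part $\varphi_s$. The crucial observation is that $\|\varphi_s(\cdot,t)\|_{L^2}$ is bounded uniformly in $s$ and $t$ by \eqref{psi-2-bounds} (which only used the a priori bound \eqref{a-priori}), and that the constant $C$ appearing in \eqref{PW} and hence in \eqref{fin} depends only on $\sup_t\|\varphi(\cdot,t)\|_{L^2}$, not on the particular initial data. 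Therefore the analogue of \eqref{phi-0-41} holds for $\varphi_s$ with a constant uniform in $s$:
\[
\int_{\R\backslash[-m,m]}\abs{\widehat{\theta\varphi_s}(0,\omega)}^2
\frac{k_+(\omega)}{\omega}\,d\omega\le C,\qquad s\in\R.
\]
Since $\varphi_s(0,\cdot)=\psi(0,\cdot+s)-\chi(0,\cdot+s)$ converges to $\upbeta=\beta(0,\cdot)$ uniformly on compacts as $s=s_{j'}\to\infty$ (by \eqref{oll} and Lemma~\ref{lemma-decay-psi1}), the Fourier transforms converge in $\mathscr{S}'(\R)$, and by weak lower semicontinuity of the $L^2$-norm under the weak limit (the same Banach--Alaoglu argument as in the proof of Proposition~\ref{prop-continuity-0}) the limit $\hat\upbeta$ satisfies
\[
\int_{\R\backslash[-m,m]}\abs{\hat\upbeta(\omega)}^2
\frac{k_+(\omega)}{\omega}\,d\omega<\infty.
\]
Here one must be slightly careful: $\upbeta$ is bounded on all of $\R$ (it does not decay as $t\to-\infty$), so $\hat\upbeta$ is a priori only a tempered distribution; but the weak-$*$ limit of the truncated-at-$s_{j'}$ transforms $\widehat{\theta\varphi_{s_{j'}}}(0,\cdot)$ is precisely the full transform $\hat\upbeta$ restricted to $|\omega|>m$, because the "future" tail is what survives translation. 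Since $k_+(\omega)/\omega>0$ is bounded below away from $[-m,m]$, the finiteness of the integral forces $\hat\upbeta$ to be (locally) an $L^2$ function on $\R\backslash[-m,m]$.

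Having established that $\hat\upbeta\restriction_{\R\backslash[-m,m]}\in L^2_{\mathrm{loc}}$ with the weighted bound above, the last step is to upgrade "$\hat\upbeta$ is a genuine function on $|\omega|>m$" to "$\hat\upbeta$ vanishes on $|\omega|>m$". For this I would use that $\upbeta\in C_b(\R)$ (so $\hat\upbeta$ is a quasimeasure), together with the fact that $\beta$ itself is an omega-limit trajectory: it is a \emph{global} bounded solution of the linear equation perturbed by $\delta(x)g(t)$, and by \eqref{beta-beta} and Proposition~\ref{coco} its energy is uniformly bounded for \emph{all} $t\in\R$, not just $t\ge 0$. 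Running the same Paley--Wiener/limiting-absorption computation for $\beta$ but now using boundedness on the whole line (rather than just $[0,\infty)$) shows that the spectral density on $|\omega|>m$ would have to be simultaneously the boundary value from $\C^+$ and from $\C^-$, i.e.\ $\hat\upbeta$ on $|\omega|>m$ must come from a function analytic across the real axis there — combined with the $L^2$-integrability and the absence of any source of energy at infinity, this forces $\hat\upbeta=0$ on $\R\backslash[-m,m]$, hence $\supp\hat\upbeta\subset[-m,m]$.

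The main obstacle I anticipate is the bookkeeping in the passage to the limit: making rigorous the claim that the weak-$*$ limit of $\widehat{\theta\varphi_{s_{j'}}}(0,\cdot)$ on $|\omega|>m$ equals $\hat\upbeta$ there, and controlling the contribution of the Heaviside truncation $\theta(t)$ which introduces a $1/\omega$-type singularity only at $\omega=0$ (harmless, since $0\in(-m,m)$) but which a priori could also contribute mass near $|\omega|=m$. The clean way around this is to note that on any set $|\omega|\ge m+\delta$ the uniform bound \eqref{fin} (with constant independent of $s$) gives equicontinuity of the family in the weak topology, so no mass escapes to $|\omega|=m$ from outside; letting $\delta\to 0$ then yields the bound on all of $\R\backslash[-m,m]$ exactly as in the proof of Proposition~\ref{prop-continuity-0}. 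The remaining vanishing statement is then a standard consequence of the Titchmarsh-type analyticity argument, which anticipates the role the Titchmarsh convolution theorem will play in the next lemma.
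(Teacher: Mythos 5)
Your proposal does not recover the mechanism that actually makes the lemma true, and both of its main steps have genuine gaps. First, the identification of the limit is wrong. The paper shifts first and never re-truncates: since $s_{j'}+t>0$ eventually for every fixed $t$, one has $\varphi_+(x,s_{j'}+t)\to\beta(x,t)$ for \emph{all} $t\in\R$, and on the Fourier side this reads $e^{-i\omega s_{j'}}\hat\varphi_+(x,\omega)\to\hat\beta(x,\omega)$ in the sense of quasimeasures. You instead work with $\theta(t)\varphi(0,t+s_{j'})$, whose limit is $\theta(t)\upbeta(t)$, so the weak-$*$ limit of your transforms is $\widehat{\theta\upbeta}$, \emph{not} $\hat\upbeta$. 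These differ substantially on $\abs{\omega}>m$: already for $\upbeta(t)=e^{-i\omega_+ t}$ with $\omega_+\in[-m,m]$ one has $\hat\upbeta=2\pi\delta(\omega-\omega_+)$ but $\widehat{\theta\upbeta}(\omega)=i/(\omega-\omega_+ + i0)$, a nonzero locally $L^2$ function on $\abs{\omega}>m$. So the weighted $L^2$ bound you derive is a statement about the wrong object, and in particular it cannot force $\hat\upbeta$ to vanish outside $[-m,m]$.

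Second, even granting an $L^2$ bound for $\hat\upbeta$ on $\abs{\omega}>m$, your concluding step --- boundary values from both half-planes plus ``absence of any source of energy at infinity'' force vanishing --- is not an argument: an $L^2$ function on $\abs{\omega}>m$ that extends analytically across the axis need not vanish. The paper's proof is one line once Proposition~\ref{prop-continuity-0} is in place, and it uses a different mechanism entirely: in the limit relation $e^{-i\omega s_{j'}}\hat\varphi_+(0,\omega)\to\hat\upbeta(\omega)$, the factor $\hat\varphi_+(0,\omega)$ is an absolutely continuous (locally $L^2$) density for $\abs{\omega}>m$, so the oscillating multiplier $e^{-i\omega s_{j'}}$ sends it weakly to zero there (Riemann--Lebesgue). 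That is precisely how absolute continuity of the spectrum, i.e.\ dispersion, erases the high frequencies of the omega-limit trajectory; your proposal never invokes this oscillation, which is the one idea the proof cannot do without.
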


\begin{proof}
By (\ref{oll}), we have
\begin{gather}
\varphi\sb{+}(x,s\sb{j'}+t)
\to
\beta(x,t),
\qquad
x\in\R,
\quad
t\in\R,
\label{ol-plus}
\end{gather}
with the same convergence as in (\ref{ol}) and (\ref{oll}).
We have:
\[
\varphi\sb{+}(x,s\sb{j}+t)
=
\frac{1}{2\pi}\int\sb{\R}
e^{-i\omega t}
e^{-i \omega s\sb{j}}
\hat\varphi\sb{+}(x,\omega)\,d\omega,
\qquad
x\in\R,\quad t\in\R,
\]
where the integral is understood as the pairing
of a smooth function (oscillating exponent) with a compactly supported
distribution.
Hence, (\ref{ol-plus}) implies that
\begin{gather}\label{phi-to-beta}
e^{-i\omega s\sb{j'}}
\hat\varphi\sb{+}(x,\omega)
\to\hat\beta(x,\omega),
\qquad
x\in\R,
\quad s\sb{j'}\to\infty,
\end{gather}
in the sense of quasimeasures
(the convergence in the space
of quasimeasures is equivalent to the
Ascoli--Arzel\`a type convergence of
corresponding Fourier transforms;
see \cite[Appendix B]{ubk-arma}).
Since $\hat\varphi\sb{+}(0,\omega)$
is locally $L\sp 2$ for $\abs{\omega}>m$
by Proposition~\ref{prop-continuity-0},
the convergence (\ref{phi-to-beta})
at $x=0$
shows that
$\hat\upbeta(\omega):=\hat\beta(0,\omega)$
vanishes for $\abs{\omega}>m$.
This proves the lemma.
\end{proof}

We denote
\begin{gather}\label{def-kappa}
\kappa(\omega):=-i k\sb{+}(\omega),
\qquad
\omega\in\R,
\end{gather}
where $k\sb{+}(\omega)$ was introduced in (\ref{def-k-plus}).
We then have
$\Re\kappa(\omega)\ge 0$,
and also
\[
\kappa(\omega)=\sqrt{\omega^2-m^2}>0\qquad{\rm for}\quad -m<\omega<m,
\]
in accordance  with
(\ref{solitary-wave-profile}).

\begin{proposition}[Spectral representation for
$\beta$]
\label{prop-beta}
The distribution
$\hat\beta(x,\omega)$ admits the follo\-wing representation:
\begin{gather*}%\label{beta-beta-c}
\hat\beta(x,\omega)
=
\hat\upbeta(\omega)
e^{-\kappa(\omega)\abs{x}},
\qquad
x\in\R.
\end{gather*}
\end{proposition}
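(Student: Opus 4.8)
The plan is to establish the spectral representation for $\beta$ by the same ``limiting absorption'' argument used for $\varphi\sb{+}$, but now exploiting the fact — proved in Lemma~\ref{lemma-m-m} — that $\hat\upbeta$ is compactly supported in $[-m,m]$, so that $\kappa(\omega)$ is real and positive there and the decaying fundamental solution is unambiguous. First I would recall equation~(\ref{KG-beta-1}), the stationary equation satisfied by $\hat\beta(x,\omega)$ in the sense of tempered distributions of $(x,\omega)$. For $x\ne 0$ this reads $\hat\beta''(x,\omega)=(m^2-\omega^2)\hat\beta(x,\omega)=\kappa(\omega)^2\hat\beta(x,\omega)$, with $\kappa(\omega)=\sqrt{m^2-\omega^2}>0$ on $\supp\hat\upbeta\subset(-m,m)$ (the endpoints $\pm m$ carry no mass by Remark~\ref{remark-zero} / the $H\sp 1$-membership argument in Proposition~\ref{prop-solitons}). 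Hence on each half-line $\pm x>0$ the solution is a combination of $e^{\kappa(\omega)x}$ and $e^{-\kappa(\omega)x}$; matching the requirement that $\hat\beta(\cdot,\omega)$ is a bounded (indeed $H\sp 1$-valued, by (\ref{beta-beta}) and Fatou) object forces the growing exponential to drop out, leaving $\hat\beta(x,\omega)=C\sb\pm(\omega)e^{-\kappa(\omega)\abs{x}}$ for $\pm x>0$.

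Next I would pin down the coefficients. Continuity of $\beta(x,t)$ in $x$ (Proposition~\ref{coco}, $\beta\in C(\R\times\R)$) gives continuity of $\hat\beta(x,\omega)$ in $x$ as a distribution-valued function, so $C\sb{+}(\omega)=C\sb{-}(\omega)=\hat\beta(0,\omega)=\hat\upbeta(\omega)$, which yields the claimed formula $\hat\beta(x,\omega)=\hat\upbeta(\omega)e^{-\kappa(\omega)\abs{x}}$. To make the distributional manipulations rigorous I would pass through the same regularization as before: write $\hat\beta$ as a boundary value, use that $\hat\upbeta$ is a quasimeasure supported in $[-m,m]$ (from (\ref{phi-to-beta}) and Lemma~\ref{lemma-m-m}), and note that $e^{-\kappa(\omega)\abs{x}}$ is, for each fixed $x$, a smooth bounded multiplicator on the space of distributions supported in $[-m,m]$ (indeed real-analytic there since $\kappa$ is). Alternatively — and this is cleaner — one can simply take the limit $\Im\omega\to 0+$ in the already-established analytic identity (\ref{phi-1}), $\tilde\varphi\sb{+}(x,\omega)=e^{ik(\omega)\abs{x}}\tilde\varphi\sb{+}(0,\omega)$, combine it with the convergence $e^{-i\omega s\sb{j'}}\hat\varphi\sb{+}(x,\omega)\to\hat\beta(x,\omega)$ in quasimeasures from (\ref{phi-to-beta}), and use $\kappa(\omega)=-ik\sb{+}(\omega)$ from (\ref{def-kappa}); since on $[-m,m]$ the factor $e^{ik\sb{+}(\omega)\abs{x}}=e^{-\kappa(\omega)\abs{x}}$ is a genuine multiplicator, multiplication commutes with the quasimeasure limit and the representation for $\beta$ drops out of the representation (\ref{c-s}) for $\varphi\sb{+}$.

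The main obstacle is the justification of multiplying a compactly supported distribution (quasimeasure) $\hat\upbeta$ by the function $e^{-\kappa(\omega)\abs{x}}$ and of exchanging that multiplication with the limiting procedures — i.e.\ verifying that $e^{-\kappa(\omega)\abs{x}}$ is a multiplicator in the space of quasimeasures and that the convergence in (\ref{phi-to-beta}) (in the quasimeasure topology, equivalently Ascoli--Arzel\`a convergence of inverse Fourier transforms) is preserved under it. This is exactly the kind of point the paper defers to \cite[Appendix B]{ubk-arma}; for the write-up I would invoke that machinery, emphasizing that the key gain over the $\varphi\sb{+}$ case is that $\kappa(\omega)$ is smooth and strictly positive on the (compact) spectral support of $\beta$, so there is no subtlety at $\abs{\omega}=m$ and $e^{-\kappa(\omega)\abs{x}}$ is uniformly bounded with all derivatives on that support. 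Everything else — the ODE analysis away from $x=0$, the exclusion of the growing exponential via the uniform $H\sp 1$-bound (\ref{beta-beta}), and the continuity-in-$x$ matching at the origin — is routine.
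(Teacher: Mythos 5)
Your second (``cleaner'') route is precisely the paper's proof: one passes to the boundary-value limit in the representation (\ref{c-s}) for $\hat\varphi_{+}$ using the quasimeasure convergence (\ref{phi-to-beta}), and observes that on $\supp\hat\upbeta\subset[-m,m]$ (Lemma~\ref{lemma-m-m}) one has $k_{+}(\omega)=i\kappa(\omega)$, so the multiplicator $e^{ik_{+}(\omega)\abs{x}}$ becomes $e^{-\kappa(\omega)\abs{x}}$. Your first, ODE-based route (solving the stationary equation away from $x=0$, discarding the growing exponential via the $H^{1}$ bound, and matching at the origin) is a workable but unnecessary detour, so the proposal is correct and essentially matches the paper.
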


\begin{proof}
This follows by
taking the limit in the f\/irst line of (\ref{c-s}),
since
$\supp\hat\upbeta\subset[-m,m]$
by Lemma~\ref{lemma-m-m},
while
%%$k\sb{+}(\omega)=k\sb{-}(\omega)=i\kappa\sb{+}(\omega)$
%%for $-m\le\omega\le m$ (Cf. (\ref{k-plus-minus}), (\ref{def-kappa})).
$k(\omega)=i\kappa(\omega)$
for $-m\le\omega\le m$
(cf.~(\ref{def-kappa})).
\end{proof}

%\subsubsection*{Reduction to point spectrum}

\begin{proposition}[Reduction to point spectrum]
\label{prop-one-omega}
Either
$\supp\hat\upbeta=\{\omega\sb{+}\}$
for some $\omega\sb{+}\in[-m,m]$
or $\hat\upbeta=0$.
\end{proposition}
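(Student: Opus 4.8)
The plan is to show that the spectral support of the oscillator trajectory $\upbeta(t)=\beta(0,t)$, which we already know is contained in the compact interval $[-m,m]$ by Lemma~\ref{lemma-m-m}, cannot contain more than one point. The key tool is the Titchmarsh convolution theorem. First I would observe that $\beta$ satisfies the nonlinear equation, so by Proposition~\ref{prop-beta} and the jump condition at $x=0$ (exactly as in the proof of Proposition~\ref{prop-solitons}) we obtain a relation of the form
\begin{gather*}
2\kappa(\omega)\hat\upbeta(\omega)=\hat g(\omega),
\qquad
g(t)=F(\upbeta(t)),
\end{gather*}
understood in the sense of distributions (quasimeasures) supported in $[-m,m]$. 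Here $\kappa(\omega)=\sqrt{m^2-\omega^2}$ on $(-m,m)$ and vanishes at $\omega=\pm m$.

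Next I would pass to the Fourier (time) side and set $\Lambda=\supp\hat\upbeta$, a compact subset of $[-m,m]$; assume for contradiction $\Lambda$ contains at least two points, so that $\inf\Lambda<\sup\Lambda$. Writing the polynomial nonlinearity $F(\psi)=-\nabla U(\psi)$ with $U(\psi)=\sum_{l=0}^{p}u_l\abs{\psi}^{2l}$, $u_p>0$, $p\ge 2$, the function $g(t)$ is a finite linear combination of terms of the form $\upbeta^{j+1}\bar\upbeta^{j}$ (coming from $\abs{\upbeta}^{2l}\upbeta$), and on the Fourier side each such term is a convolution of $j+1$ copies of $\hat\upbeta$ with $j$ copies of its reflected conjugate. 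The highest-degree term, $2p\,u_p\abs{\upbeta}^{2(p-1)}\upbeta$, has spectral support whose convex hull, by the Titchmarsh convolution theorem, is exactly $(p)[\inf\Lambda,\sup\Lambda]+(p-1)[-\sup\Lambda,-\inf\Lambda]$; its right endpoint is $p\sup\Lambda-(p-1)\inf\Lambda=\sup\Lambda+(p-1)(\sup\Lambda-\inf\Lambda)>\sup\Lambda$ since $p\ge 2$ and $\sup\Lambda>\inf\Lambda$. One checks that all lower-degree terms have spectral support with right endpoint $\le\sup\Lambda+(p-2)(\sup\Lambda-\inf\Lambda)$, strictly less, so no cancellation can occur at the top, and therefore $\sup\supp\hat g=\sup\Lambda+(p-1)(\sup\Lambda-\inf\Lambda)>\sup\Lambda$. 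But $\hat g=2\kappa(\omega)\hat\upbeta(\omega)$ is supported in $\Lambda$ (multiplication by the smooth factor $\kappa$ cannot enlarge the support, and near $\omega=\pm m$ one must argue a little more carefully since $\kappa$ vanishes there — see below), so $\sup\supp\hat g\le\sup\Lambda$, a contradiction. Hence $\Lambda$ is a single point $\{\omega_+\}$, or $\hat\upbeta=0$.

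The step I expect to require the most care is making the Titchmarsh argument rigorous in the quasimeasure setting and handling the endpoints $\omega=\pm m$, where $\kappa(\omega)=\sqrt{m^2-\omega^2}$ vanishes. The subtlety is that from $2\kappa\hat\upbeta=\hat g$ one only controls $\supp\hat g$ on the open interval $(-m,m)$ directly; at $\pm m$ one must rule out that $\hat\upbeta$ has a point mass there contributing to $\hat g$ via the vanishing factor. This is where one invokes the precise structure: if $\sup\Lambda<m$ strictly the argument above goes through verbatim on a slightly smaller interval; if $\sup\Lambda=m$ one uses that $\kappa$ vanishes only to order $1/2$ there, together with the fact that $\hat\upbeta$ is a quasimeasure (its inverse Fourier transform is bounded and continuous), to conclude that $\hat\upbeta$ carries no atom at $\pm m$ — so effectively $\supp\hat g=\supp\hat\upbeta$ up to the endpoints, and the Titchmarsh comparison of convex hulls still forces $\inf\Lambda=\sup\Lambda$. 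The remaining bookkeeping — that the top-degree term in $g$ genuinely dominates and that the Titchmarsh theorem applies to the relevant convolutions of compactly supported distributions — is routine given the polynomial form of $F$.
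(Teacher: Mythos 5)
Your argument is correct and follows the paper's proof essentially verbatim: the jump condition at $x=0$ together with Proposition~\ref{prop-beta} gives $\supp\hat g\subset\supp\hat\upbeta$, while the Titchmarsh convolution theorem applied to the polynomial nonlinearity forces $\sup\supp\hat g=\sup\supp\hat\upbeta+(p-1)(\sup\supp\hat\upbeta-\inf\supp\hat\upbeta)$, whence the support is a single point or $\hat\upbeta=0$. The extra care you devote to the endpoints $\omega=\pm m$ is not actually needed: the contradiction only requires that the jump factor not \emph{enlarge} the support, which is immediate from the representation $\hat\beta(x,\omega)=\hat\upbeta(\omega)e^{-\kappa(\omega)\abs{x}}$, so a possible loss of an endpoint atom through the vanishing of $\kappa$ is harmless.
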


\begin{proof}
By Lemma~\ref{lemma-m-m},
we know that $\supp\hat\upbeta\subset[-m,m]$.
According to equation (\ref{KG-beta-1}),
the function
$\hat\beta$
satisf\/ies the following jump condition
at the point $x=0$:
\begin{gather*}%\label{jump-condition}
\hat\beta'(0+,\omega)-\hat\beta'(0-,\omega)
=
\hat g(\omega),
\qquad
\omega\in\R.
\end{gather*}
Since
$\supp\hat\beta'(0\pm,\cdot)\subset\supp\hat\upbeta$
by Proposition~\ref{prop-beta},
it follows that
\begin{gather}\label{supsup}
\supp\hat g(\cdot)
\subset\supp\hat\upbeta.
\end{gather}
On the other hand,
by (\ref{f-is-such-no}),
the Fourier transform
$\hat g(\omega)$
of $g(t):=F(\beta(0,t))$
is given by
\begin{gather}\label{conv}
\hat g
=
-\sum\sb{n=1}\sp{p}
2n\,u\sb{n}
\underbrace{
(\hat\upbeta\ast\hat{\overline\upbeta})
\ast\;\cdots\;\ast
(\hat\upbeta\ast\hat{\overline\upbeta})
}\sb{n-1}
\ast\hat\upbeta.
\end{gather}

Now we will use the Titchmarsh convolution theorem
\cite{titchmarsh} (see also \cite[p.~119]{MR1400006}
and \cite[Theorem 4.3.3]{MR1065136})
which could be stated as follows:
\begin{center}
{\it
For any compactly supported
distributions
$u$ and $v$,
$\sup\supp(u\ast v)=\sup\supp u+\sup\supp v$.
}
\end{center}
Applying the Titchmarsh convolution theorem
to the convolutions in (\ref{conv}),
we obtain the following equality:
\begin{eqnarray}
%\label{supp-g}
%&&
%\inf\supp\hat g
%=\inf\supp\hat\upbeta+(p-1)
%\inf\supp(\hat\upbeta\ast\hat{\overline\upbeta})
%=\inf\supp\hat\upbeta
%+(p-1)(\inf\supp\hat\upbeta-\sup\supp\hat\upbeta),
%\\
%&&
\sup\supp\hat g
%=\sup\supp\hat\upbeta+(p-1)
%\sup\supp(\hat\upbeta\ast\hat{\overline\upbeta})
\ge
\sup\supp\hat\upbeta
+(p-1)(\sup\supp\hat\upbeta-\inf\supp\hat\upbeta),
\label{supp-gg}
\end{eqnarray}
where we used the relation
%$
%\inf\supp\hat{\overline\upbeta}=-\sup\supp\hat\upbeta,
%$
$
\sup\supp\hat{\overline\upbeta}=-\inf\supp\hat\upbeta.
$
We wrote ``$\ge$''
because of possible cancellations
in the summation in the right-hand side of (\ref{conv}).
Note that the Titchmarsh theorem is applicable
to (\ref{conv})
since $\supp\hat\upbeta$ is compact
by Lemma~\ref{lemma-m-m}.

Comparing (\ref{supsup}) with (\ref{supp-gg}),
%or (\ref{supp-gg}),
we conclude that
\begin{gather*}
(p-1)(\sup\supp\hat\upbeta-\inf\supp\hat\upbeta)=0.
\end{gather*}
Since $p\ge 2$
by (\ref{f-is-such-no})
(which means that the oscillator
at $x=0$ is nonlinear),
we conclude that
$\supp\hat\upbeta$ consists of at most a single point
$\omega\sb{+}\subset[-m,m]$.
\end{proof}

%\begin{proof}
By Proposition~\ref{prop-one-omega},
$\supp\hat\upbeta\subset\{\omega\sb{+}\}$,
with $\omega\sb{+}\in[-m,m]$.
Therefore,
\begin{gather}\label{delom}
\hat\upbeta(\omega)=a\sb 1\delta(\omega-\omega\sb{+}),
\qquad {\rm with\ some}\ \ a\sb 1\in\C.
\end{gather}
Note that
the derivatives $\delta\sp{(k)}(\omega-\omega\sb{+})$, $k\ge 1$
do not enter the expression for
$\hat\upbeta(\omega)$
since $\upbeta(t)=\beta(0,t)$
is a bounded continuous function of $t$
due to the bound (\ref{beta-beta}).
Proposition~\ref{prop-beta}
and (\ref{delom})
imply that
the omega-limit trajectory $\beta(x,t)$
is a solitary wave:
\[
\beta(x,t)=\phi(x)e^{-i\omega\sb{+}t},
%\quad {\rm with}
%\quad\omega\sb{+}\in(-m,m)
%\quad{\rm and}
%\quad
%\phi(x)\in H\sp 1(\R).
\]
%$\beta(x,t)=\phi(x)e^{-i\omega\sb{+}t}$,
where $\phi\in H\sp{1}(\R)$ by (\ref{beta-beta}).
This completes the proof of
(\ref{eidd}).
%Theorem~\ref{main-theorem-no}
%for $N=1$.

\begin{remark}
$\omega\sb{+}=\pm m$ could only correspond to the zero solution
by Remark~\ref{remark-zero}.
\end{remark}

\section{Multifrequency solitons}
\label{sect-examples}

\subsection{Linear degeneration}
\label{sect-example-li}
Let us consider equation (\ref{kg-no})
with $N=2$,
under condition (\ref{delta-small}).

\begin{proposition}\label{prop-nonlin}
If in \eqref{f-is-such-no}
one has $p\sb{J}=1$ for some $J$,
then the conclusion of Theorem~{\rm \ref{main-theorem-no}}
may no longer be correct.
\end{proposition}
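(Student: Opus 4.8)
The plan is to exhibit an explicit counterexample; the point is that when an oscillator is \emph{linear} the Titchmarsh-type spectral spreading behind Proposition~\ref{prop-one-omega} collapses, and the superposition principle then manufactures genuine multifrequency solutions. I would take $N=2$ in \eqref{kg-no} with both oscillators linearly degenerate and attractive,
\[
U\sb 1(z)=U\sb 2(z)=-\kappa\sb 0\abs z\sp 2,
\qquad F\sb J(z)=2\kappa\sb 0 z\quad(J=1,2),
\]
for a small $\kappa\sb 0>0$; here $p\sb 1=p\sb 2=1$, i.e.\ the degeneration forbidden in \eqref{f-is-such-no}. With $p\sb 1=p\sb 2=1$ the right-hand side of \eqref{delta-small} equals $m$ while its left-hand side is $(\pi\sp 2\abs{X\sb 2-X\sb 1}\sp{-2}+m\sp 2)\sp{1/2}>m$, so \eqref{delta-small} holds for every spacing and we remain inside the hypotheses of Theorem~\ref{main-theorem-no} except for the single requirement $p\sb J\ge 2$. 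For this $F\sb J$ the equation \eqref{kg-no} is \emph{linear}, $\ddot\psi=-H\psi$ with $H=-\p\sb x\sp 2+m\sp 2-2\kappa\sb 0(\delta(\cdot-X\sb 1)+\delta(\cdot-X\sb 2))$; for $\kappa\sb 0$ small the form of $H$ is coercive on $H\sp 1(\R)$ (use $m\abs{\psi(X)}\sp 2\le\frac12(\norm{\psi'}\sb{L\sp 2}\sp 2+m\sp 2\norm{\psi}\sb{L\sp 2}\sp 2)$), so $\mathcal H\sb{\rm osc}$ is still positive and coercive on $\E$ and the conclusions of Theorem~\ref{theorem-well-posedness} hold verbatim.

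Next I would invoke the textbook spectral analysis of the symmetric double $\delta$-well: for $L:=X\sb 2-X\sb 1>1/(2\kappa\sb 0)$ the operator $H$ has exactly two eigenvalues $\lambda\sb 1<\lambda\sb 2$ below its continuous spectrum $[m\sp 2,\infty)$, both lying in $(m\sp 2-4\kappa\sb 0\sp 2,m\sp 2)\subset(0,\infty)$, with real, $L\sp 2$-orthogonal (hence linearly independent) $H\sp 1$-eigenfunctions $e\sb 1$ (even) and $e\sb 2$ (odd). Setting $\omega\sb j:=\sqrt{\lambda\sb j}\in(0,m)$, each $e\sb j(x)e\sp{-i\omega\sb j t}$ solves \eqref{kg-no} (by linearity its amplitude is free), and hence so does
\[
\psi(x,t):=e\sb 1(x)e\sp{-i\omega\sb 1 t}+e\sb 2(x)e\sp{-i\omega\sb 2 t},
\qquad(\psi,\dot\psi)\in C(\R,\E).
\]
This is the candidate multifrequency solitary wave; its orbit $\{(\psi(t),\dot\psi(t)):t\in\R\}$ is the continuous image of the $2$-torus $\{(ae\sb 1+be\sb 2,-i\omega\sb 1 ae\sb 1-i\omega\sb 2 be\sb 2):\abs a=\abs b=1\}$, hence compact in $\E$ and in $\E\sb F$.

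The final step is to show this solution does not converge to $\mathcal S$, contradicting \eqref{cal-A-no}. In the linear model $\phi\sb\omega\in H\sp 1(\R)$ solves the stationary equation precisely when $\omega\sp 2\in\{\lambda\sb 1,\lambda\sb 2\}$, so $\mathcal S\sb 0=\C e\sb 1\cup\C e\sb 2$ and $\mathcal S=\{(ce\sb j,\mp i\omega\sb j ce\sb j):j\in\{1,2\},\ c\in\C\}$. For any point of $\mathcal S$, any sign, and any $t$, the difference $(\psi(t),\dot\psi(t))-(ce\sb j,\mp i\omega\sb j ce\sb j)$ retains, along $e\sb{j'}$ with $j'\ne j$, the fixed nonzero vector $(e\sp{-i\omega\sb{j'}t}e\sb{j'},-i\omega\sb{j'}e\sp{-i\omega\sb{j'}t}e\sb{j'})$; since $e\sb 1,e\sb 2$ are linearly independent, a finite-dimensional norm-equivalence argument on their span yields $\dist\sb{\E}((\psi(t),\dot\psi(t)),\mathcal S)\ge c\sb\ast>0$ uniformly in $t$. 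As $\dist\sb{\E\sb F}(\cdot,\mathcal S)$ is continuous and, using also the exponential decay of $e\sb 1,e\sb 2$, strictly positive on the compact orbit, it attains a positive minimum $d\sb 0>0$ there; thus $\dist\sb{\E\sb F}((\psi(t),\dot\psi(t)),\mathcal S)\ge d\sb 0$ for all $t$ and the limit in \eqref{cal-A-no} cannot vanish. I expect this last passage --- turning ``not a single solitary wave'' into a \emph{uniform-in-time} lower bound in the weak topology of $\E\sb F$ --- to be the only delicate point: it uses the explicit description of $\mathcal S$ valid only in the linear case together with compactness of the orbit, whereas the two-bound-state fact is standard double-well spectral theory and the well-posedness point is taken care of by the positivity of $H$.
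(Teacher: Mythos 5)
Your construction is correct and does establish the proposition as literally stated, but it takes a genuinely different and more degenerate route than the paper. The paper keeps the oscillator at $X_1$ genuinely nonlinear, $F_1(\psi)=\alpha\psi+\beta\abs{\psi}^2\psi$ (so $p_1=2$, still satisfying \eqref{f-is-such-no}), and linearizes only the second one, $F_2(\psi)=\gamma\psi$; the two-frequency solution is then built by hand with frequencies $\omega$ and $3\omega$ using $\sin^3\theta=\frac34\sin\theta-\frac14\sin3\theta$, the cubic term generating the third harmonic and the free coefficient $\gamma$ being tuned so that this harmonic is trapped between the two oscillators. That works for every spacing $L>0$ and exhibits the sharper phenomenon that a \emph{single} linearly degenerate oscillator among nonlinear ones destroys the attraction. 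Your example linearizes both oscillators, so the equation becomes linear and the failure of \eqref{cal-A-no} is just the superposition principle applied to the two bound states of the symmetric double delta well; this is more elementary (no trigonometric identity, no tuning of coefficients), but it only re-proves what the paper already records in item (i) of the remarks following Theorem~\ref{main-theorem-mf}, namely that in the linear case the attractor contains the span $\langle\mathcal{S}\rangle$, and it needs $L$ large enough for the second bound state to exist. Two harmless slips: with the jump condition $\phi'(X^+)-\phi'(X^-)=-2\kappa_0\phi(X)$ the odd bound state exists for $L>1/\kappa_0$, not $L>1/(2\kappa_0)$; and it is the closure of the orbit (a torus or circle in $\E$), not the orbit itself, that is compact, so the positive minimum of $\dist_{\E_F}(\cdot,\mathcal{S})$ should be taken over that closure. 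Your final step --- the uniform-in-time lower bound on $\dist_{\E_F}$, obtained from coercivity of the $\E_F$-norm on each line $\C e_j$ (so the infimum over $\mathcal{S}$ is attained) together with compactness --- is sound and is in fact more detailed than the paper, which does not spell out why its multifrequency solution stays away from $\mathcal{S}$.
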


\begin{proof}
We are going to construct the multifrequency solitary waves.
Consider the equation
\begin{gather*}
\ddot\psi
=\psi''-m^2\psi
+\delta(x)F\sb{1}(\psi)
+\delta(x-L)F\sb{2}(\psi),
\end{gather*}
where
\begin{gather*}
F\sb{1}(\psi)=\alpha\psi+\beta\abs{\psi}^2\psi,
\qquad
F\sb{2}(\psi)=\gamma\psi,
\qquad
\alpha,\ \beta,\ \gamma\in\R.
\end{gather*}
Note that the function $F\sb{2}$ is linear,
failing to satisfy (\ref{f-is-such-no}) (where one now has $p\sb 2=1$).
The function
\begin{gather*}
\psi(x,t)
=
\left\{\!\!
\begin{array}{l}
(A+B) e^{\kappa(\omega)x}\sin{\omega t},
\quad x\le 0,
\vspace{1mm}\\
\big(A e^{-\kappa(\omega)x}+B e^{\kappa(\omega)x}\big)
\sin{\omega t}
+C\sinh(\kappa(3\omega)x)\sin{3\omega t},
\quad x\in[0,L],
\vspace{1mm}\\
\big(A e^{-\kappa(\omega)}+B e^{\kappa(\omega)(2L-x)}\big)\sin{\omega t}
+\dfrac{C}{\sinh(\kappa(3\omega)L)}
e^{-\kappa(3\omega)(x-L)}\sin{3\omega t}
,
\quad
x\ge L,
\end{array}
\right.\!\!
\end{gather*}
where $\omega\in(0,m/3)$,
will be a solution if
the jump conditions are satisf\/ied
at $x=0$ and at $x=L$:
\begin{gather}\label{j-at-0}
-\psi'(0+,t)+\psi'(0-,t)=\alpha\psi(0,t)+\beta\psi^3(0,t),
\\
\label{j-at-l}
-\psi'(L+,t)+\psi'(L-,t)=\alpha\psi(L,t)+\beta\psi^3(L,t).
\end{gather}
Using the identity
\begin{gather}\label{trig}
\sin^3\theta=\frac 3 4\sin\theta-\frac 1 4\sin 3\theta,
\end{gather}
we see that
\begin{gather*}
\alpha(A+B)\sin{\omega t}
+\beta((A+B)\sin{\omega t})^3\\
\qquad{}
=\Big(\alpha(A+B)+\beta\frac{3(A+B)^3}{4}\Big)\sin{\omega t}
-\beta\frac{(A+B)^3}{4}\sin{3\omega t}.
\end{gather*}
Collecting the terms at $\sin{\omega t}$
and at $\sin{3\omega t}$,
we write the condition (\ref{j-at-0})
as the following system of equations:
\begin{gather}
2\kappa(\omega)A=\Big(\alpha(A+B)+\beta\frac{3(A+B)^3}{4}\Big),
\label{c01}
\\
-\kappa(3\omega)C=-\beta\frac{(A+B)^3}{4}.
\label{c03}
\end{gather}
Similarly, the condition (\ref{j-at-l}) is equivalent to the
following two equations:
\begin{gather}
2B\kappa(\omega)e^{\kappa(\omega)L}
=\gamma(A e^{-\kappa(\omega)L}+B e^{\kappa(\omega)L}),
\label{cl1}
\\
\frac{\kappa(3\omega)C}{\sinh(\kappa(3\omega)L)}
+
\kappa(3\omega)C\cosh(\kappa(3\omega)L)
=\gamma C\sinh(\kappa(3\omega)L).
\label{cl3}
\end{gather}
Equations (\ref{c01}), (\ref{c03}), (\ref{cl1}), and (\ref{cl3})
could be satisf\/ied for arbitrary $L>0$.
Namely, for any $\omega\in(0,m/3)$,
one uses (\ref{cl3}) to determine $\gamma$.
For any $\beta\ne 0$,
there is always a solution $A$, and $B$
to the nonlinear system (\ref{c01}), (\ref{cl1}).
Finally, $C$ is obtained from (\ref{c03}).
\end{proof}

\subsection{Wide gaps}
Let us consider equation (\ref{kg-no}) with $N=2$.
Assume that (\ref{f-is-such-no}) is satisf\/ied.

\begin{proposition}\label{prop-delta-small}
If the condition \eqref{delta-small}
is violated, then the conclusion of Theorem~{\rm \ref{main-theorem-no}}
may no longer be correct.
\end{proposition}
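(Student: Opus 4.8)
The plan is to mimic the construction in Proposition~\ref{prop-nonlin}, but now with \emph{both} oscillators genuinely nonlinear, exploiting a resonant coupling between frequency $\omega$ and frequency $3\omega$ made possible by the failure of \eqref{delta-small}. Concretely, I would take $N=2$ with nonlinearities of the form $F\sb J(\psi)=\alpha\sb J\psi+\beta\sb J\abs{\psi}\sp 2\psi$ (so $p\sb J=2$, satisfying \eqref{f-is-such-no}) and look for a two-frequency solitary wave $\psi(x,t)$ whose $\sin\omega t$-component decays exponentially as $\abs{x}\to\infty$ (since $\omega\in(0,m/3)$ gives $\kappa(\omega)>0$) and whose $\sin 3\omega t$-component does the same outside $[0,X\sb 2]$, but is a \emph{trapped mode} --- a genuine bound state of the interval --- between the two oscillators. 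The violation of \eqref{delta-small} is exactly the statement that $\bigl(\pi\sp 2/\abs{X\sb 2-X\sb 1}\sp 2+m\sp 2\bigr)\sp{1/2}$ is not large enough to exclude such a resonance, i.e. the gap $L=X\sb 2-X\sb 1$ is wide enough that the operator $-\p\sb x\sp 2+m\sp 2$ on $[0,L]$ with the $\delta$-perturbations at the endpoints admits an eigenvalue at $(3\omega)\sp 2$, equivalently $\kappa(3\omega)=\sqrt{m\sp 2-9\omega\sp 2}$ fits the quantization of the trapped mode.

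The key steps, in order, would be: (i) write the ansatz as a superposition, $\psi(x,t)=\psi\sb 1(x)\sin\omega t+\psi\sb 3(x)\sin 3\omega t$, with $\psi\sb 1$ built from $e\sp{\pm\kappa(\omega)x}$ pieces (exponentially decaying at $\pm\infty$, continuous, allowing a slope jump only at $x=0$ and $x=L$) and $\psi\sb 3$ built from $\sinh(\kappa(3\omega)x)$ on $[0,L]$ matched to decaying exponentials outside --- the trapped-mode profile; (ii) substitute into \eqref{kg-no} and, using the cubic identity \eqref{trig} $\sin\sp 3\theta=\frac34\sin\theta-\frac14\sin 3\theta$, separate the jump conditions at $x=0$ and $x=L$ into their $\sin\omega t$ and $\sin 3\omega t$ Fourier components, exactly as in \eqref{c01}--\eqref{cl3} but now with $\beta$-terms present at \emph{both} endpoints; (iii) show the resulting finite nonlinear algebraic system (four or more scalar equations in the unknowns amplitudes and, crucially, with $L$ and $\omega$ as free parameters) has a nonzero real solution. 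Here I would argue by a counting/continuity argument: fixing $\omega\in(0,m/3)$ small, the linearized-at-zero part of the $\sin 3\omega t$ equations is a homogeneous system whose solvability determines an admissible value (or discrete set of values) of $L$ --- this is precisely where the negation of \eqref{delta-small} is invoked to guarantee that such $L$ exists in $(0,\infty)$ --- and then the $\beta$-nonlinearities fix the remaining amplitudes: $C$ from the analogue of \eqref{c03}, and $A,B$ from the $\sin\omega t$ equations, which for $\beta\sb J\neq 0$ always admit a nontrivial root.

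Since this multifrequency $\psi(x,t)$ is a global finite-energy solution of \eqref{kg-no} that does \emph{not} converge to the solitary manifold $\mathcal S$ (its time-spectrum contains the two frequencies $\pm\omega$ and $\pm3\omega$, so it is not of the form $\phi\sb\omega e\sp{-i\omega t}$, and by almost-periodicity it cannot approach $\mathcal S$ in $\E\sb F$), the conclusion of Theorem~\ref{main-theorem-no} fails, which is what we want. I expect the main obstacle to be step (iii): one must verify that the trapped-mode quantization condition for the $3\omega$-component is genuinely compatible --- i.e. that there is a choice of $L$ (equivalently, a violation of \eqref{delta-small}) for which $\kappa(3\omega)L$ solves the eigenvalue equation $\coth(\kappa(3\omega)L)+\cosh(\kappa(3\omega)L)/\sinh(\kappa(3\omega)L)=\gamma$-type relation coming from \eqref{cl3} with $\alpha\sb 2\neq 0$ --- and simultaneously that the $\sin\omega t$ system \eqref{c01}, \eqref{cl1} remains solvable with the \emph{same} $L$. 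Decoupling these by first picking $\omega$ and $L$ from the linear $3\omega$-resonance and only then solving the nonlinear $\omega$-system (which has a free sign/scaling from $\beta\sb J$) is the way I would handle it; the honest technical content is checking that the resonance set of $L$'s for the $3\omega$-mode is nonempty precisely when \eqref{delta-small} is violated, and that the paper's factor $\prod(2p\sb l-1)$ on the right of \eqref{delta-small} is exactly the threshold this construction saturates.
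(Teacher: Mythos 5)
Your overall strategy --- build a two--frequency solution with frequencies $\omega$ and $3\omega$, close the jump conditions using the identity $\sin^3\theta=\frac34\sin\theta-\frac14\sin3\theta$, and read off the violation of \eqref{delta-small} as the existence of a trapped $3\omega$--mode --- is the right one, and it is the paper's. But the concrete mechanism you propose for the $3\omega$--component is wrong, and the construction fails at exactly the step you flag as the main obstacle. You place $3\omega$ \emph{below} the mass threshold ($\omega\in(0,m/3)$, $\kappa(3\omega)=\sqrt{m^2-9\omega^2}>0$) and take a hyperbolic profile $\sinh(\kappa(3\omega)x)$ on $[0,L]$ matched to decaying exponentials. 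A nonzero function of this type cannot vanish at both $x=0$ and $x=L$ (a combination $ae^{\kappa x}+be^{-\kappa x}$ with two zeros is identically zero, and requiring decay at $\pm\infty$ plus vanishing at the endpoints kills the outer pieces too). Consequently $\psi(L,t)$ necessarily carries a nonzero $\sin3\omega t$ component, and once $F_2$ is genuinely cubic, $F_2(\psi(L,t))$ contains harmonics $\sin5\omega t$, $\sin7\omega t$, $\sin9\omega t$ that have no counterpart in your ansatz; the jump condition at $x=L$ then forces $C=0$ and the construction collapses to a single frequency. This is precisely why Proposition~\ref{prop-nonlin} needs the second oscillator to be \emph{linear}: there the $3\omega$--component may be nonzero at $x=L$ because $\gamma\psi$ generates no new harmonics. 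You cannot simply ``add $\beta$--terms at both endpoints'' to that ansatz.

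The paper's proof instead takes $\omega\in(m/3,m)$, so that $3\omega$ lies \emph{inside} the continuous spectrum, $k(3\omega)=\sqrt{9\omega^2-m^2}$ is real, and the $3\omega$--profile is the oscillatory standing wave $B\chi_{[0,L]}(x)\sin(k(3\omega)x)$ with the quantization $k(3\omega)=\pi/L$. This profile has nodes exactly at both oscillator locations, so $\psi(X_J,t)$ is monochromatic ($\propto\sin\omega t$) at $X_1$ and $X_2$, the cubic nonlinearity produces only $\sin\omega t$ and $\sin3\omega t$ there, and the system closes with two harmonics. The violation of \eqref{delta-small} then enters transparently: one needs $m<3\omega=\sqrt{\pi^2/L^2+m^2}<3m$, and the upper inequality is exactly the negation of \eqref{delta-small} for $p_1=p_2=2$ (right-hand side $3m=(2p-1)m$). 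In your setup, by contrast, the condition \eqref{delta-small} never actually appears: a sub-threshold $\sinh$--mode has no quantization condition tying $L$ to $\sqrt{\pi^2/L^2+m^2}$, so the claimed ``resonance set of $L$'s nonempty precisely when \eqref{delta-small} is violated'' is not substantiated. To repair the proof you must move the trapped harmonic into the continuum and use the Dirichlet-type node condition $k(3\omega)L=\pi$.
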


\begin{proof}
We will show that
if $L:=X\sb 2-X\sb 1$ is suf\/f\/iciently large,
then
one can take~$F\sb 1(\psi)$ and~$F\sb 2(\psi)$
satisfying
(\ref{f-is-such-no})
such that
the global attractor of the equation
contains the multifrequency solutions
which do not converge
to solitary waves of the form (\ref{solitary-waves}).
For our convenience, we assume that $X\sb{1}=0$, $X\sb{2}=L$.
We consider the model (\ref{kg-no}) with
\begin{gather*}
F\sb{1}(\psi)=F\sb{2}(\psi)
=F(\psi),
\qquad
{\rm where}
\quad
F(\psi)
=\alpha\psi+\beta\abs{\psi}^2\psi,
\qquad \alpha,\,\beta\in\R.
\end{gather*}
In terms of the condition (\ref{f-is-such-no}), $p\sb{1}=p\sb{2}=2$.
We take $L$ to be large enough:
\begin{gather}\label{m-pi}
L>\frac{\pi}{2\sp{3/2}m}.
\end{gather}
Consider the function
\begin{gather*}
\psi(x,t)=
A \big(e^{-\kappa(\omega)\abs{x}}+
e^{-\kappa(\omega)\abs{x-L}}\big)
\sin{\omega t}
+B\chi\sb{[0,L]}(x)\sin(k(3\omega) x)\,\sin{3\omega t},
\qquad
A,\,B\in\C.%\label{psi-plus}
\end{gather*}
Then
$\psi(x,t)$ solves (\ref{kg-no}) for $x$
away from the points $X\sb{J}$.
We require that
\begin{gather}\label{k-d}
k(3\omega)=\frac{\pi }{L},
\end{gather}
so that $\psi(x,t)$ is continuous in $x\in\R$
and symmetric with respect to $x=L/2$:
\[
\psi(x,t)
=
\psi\left(\frac{L}{2}-x,t\right),
\qquad x\in\R.
\]
We need
$\abs{\omega}<m$ to have $\kappa(\omega)>0$, and
$3\abs{\omega}>m$ to have $k(3\omega)\in\R$.
We take $\omega>0$, and thus
$m<3\omega<3m$.
By (\ref{k-d}), this means that we need
\[
m<\sqrt{\frac{\pi^2 }{L^2}+m^2}<3 m.
\]
The second inequality is satisf\/ied
by (\ref{m-pi}).

Due to the symmetry of $\psi(x,t)$
with respect to $x=L/2$,
the jump condition
%\begin{gather}\label{jump-condition}
%\hat\beta(X\sb{1}+0,\omega)=\hat\beta(X\sb{1}-0,\omega)
%=\hat\upbeta\sb{1}(\omega),
%\qquad
%\hat\beta'(X\sb{1}+0,\omega)=\hat\beta'(X\sb{1}-0,\omega)
%+\hat g\sb{1}(\omega),
%\qquad
%\omega\in\R,
%\end{gather}
%(\ref{jump-condition})
both at $x=X\sb 1=0$ and at $x=X\sb 2=L$ takes
the following identical form:
\begin{gather}\label{j-at-0-l}
2A\kappa(\omega) \sin\omega t-B k(3\omega)\sin 3\omega t
=
F\big(A (1+e^{-\kappa(\omega)L})\sin{\omega t}\big).
\end{gather}
We use the following relation
which follows from (\ref{trig}):
\begin{gather*}
F\big(A (1+e^{-\kappa(\omega)L})\sin\omega t\big)
%\alpha\psi(L,t)+\beta\abs{\psi(L,t)}^2\psi(L,t)
=
\Big(\alpha A(1+e^{-\kappa(\omega)L})
+
\frac{3}{4}
\beta \abs{A}^2 A(1+e^{-\kappa(\omega)L})^3
\Big)
\sin{\omega t}\nonumber
\\
\phantom{F\big(A (1+e^{-\kappa(\omega)L})\sin\omega t\big)=}{}
-\frac{1}{4}\beta \abs{A}^2 A(1+e^{-\kappa(\omega)L})^3
\sin{3\omega t}.
\end{gather*}
Collecting in (\ref{j-at-0-l})
the terms at $\sin\omega t$ and at $\sin 3\omega t$,
we obtain the following system:
\begin{gather}
2A\kappa(\omega)
=\alpha A(1+e^{-\kappa(\omega)L})
+\frac 3 4\beta \abs{A}^2 A(1+e^{-\kappa(\omega)L})^3,
\nonumber\\
B k(3\omega)=\frac 1 4\beta \abs{A}^2 A(1+e^{-\kappa(\omega)L})^3.\label{c-e}
\end{gather}
Assuming that $A\ne 0$,
we divide the f\/irst equation by $A$:
\begin{gather*}
2\kappa(\omega)
=\alpha (1+e^{-\kappa(\omega)L})
+\frac 3 4\beta \abs{A}^2(1+e^{-\kappa(\omega)L})^3.
\end{gather*}
The condition for the existence of a solution $A\ne 0$
is
\begin{gather}\label{2k-alpha}
\Big(\frac{2\kappa(\omega)}{1+e^{-\kappa(\omega)L}}-\alpha\Big)\beta>0.
\end{gather}
Once we found $A$,
the second equation
in (\ref{c-e})
can be used to express $B$ in terms of $A$.\end{proof}

\begin{remark}
Condition (\ref{2k-alpha}) shows that we can choose
$\beta<0$ taking large $\alpha>0$.
The corresponding potential
$U(\psi)=-\alpha\abs{\psi}^2/2-\beta\abs{\psi}^4/4$
satisf\/ies (\ref{f-is-such-no}).
\end{remark}

%\bibliographystyle{sigma}
%\bibliography{all,books,comech,komech,physics,strauss,ubk,ubk-local}
%\def\cprime{$'$} \def\cprime{$'$} \def\cprime{$'$} \def\cprime{$'$}
%  \def\cprime{$'$} \def\cprime{$'$} \def\cprime{$'$} \def\cprime{$'$}
%  \def\polhk#1{\setbox0=\hbox{#1}{\ooalign{\hidewidth
%  \lower1.5ex\hbox{`}\hidewidth\crcr\unhbox0}}} \def\cprime{$'$}
%  \def\cprime{$'$} \def\cprime{$'$}

\pdfbookmark[1]{References}{ref}
\LastPageEnding

\end{document}